\documentclass[a4paper,12pt]{article}
\usepackage[top=2.5cm,bottom=2.5cm,left=2.5cm,right=2.5cm]{geometry}
\usepackage{cite,amsmath,amssymb}
    \usepackage[margin=1cm,%
                font=small,%
                format=hang,%
                labelsep=period,%
                labelfont=bf]{caption}
    \pagestyle{empty}

\usepackage[algoruled,linesnumbered]{algorithm2e}
\usepackage{algorithmic}

\usepackage{amsfonts,epsf,here,graphicx}
\usepackage{latexsym,multirow,rotating}
\usepackage{pgf,tikz,subfigure}
\usepackage{epstopdf}

\newtheorem{theorem}{\bf Theorem}[section]

\newtheorem{lemma}[theorem]{\bf Lemma}

\newtheorem{observation}[theorem]{\bf Observation}
\newtheorem{definition}[theorem]{\bf Definition}

\newtheorem{problem}[theorem]{\bf Problem}

\newcommand{\qed}{\hfill $\square$ \bigskip}

\textheight = 24cm
\textwidth = 16cm

\begin{document}

\baselineskip=0.30in

\begin{center}
{\LARGE \bf Resonance graphs of catacondensed even ring systems}
\bigskip \bigskip

{\large \bf Simon Brezovnik$^a$, \qquad
Niko Tratnik$^{a}$,\\ \qquad Petra \v Zigert Pleter\v sek$^{a,b}$
}
\bigskip\bigskip

\baselineskip=0.20in

{\tt simon.brezovnik2@um.si, niko.tratnik@um.si, petra.zigert@um.si}
\medskip

$^a$ University of Maribor, Faculty of Natural Sciences and Mathematics, Slovenia \\
\medskip

$^b$ University of Maribor, Faculty of Chemistry and Chemical Engineering, Slovenia\\
\medskip

\bigskip\medskip

(Received \today)

\end{center}

\noindent
\begin{center} {\bf Abstract} \end{center}
A catacondensed even ring system (shortly CERS) is a simple bipartite 2-connected outerplanar graph with all 
vertices of degree 2 or 3. In this paper, we investigate the resonance graphs (also called $Z$-transformation graphs) of CERS and firstly show that two even ring chains are resonantly equivalent iff their resonance graphs are isomorphic. As the main result, we characterize CERS whose resonance graphs are daisy cubes. In this way, we greatly generalize the result known for kinky benzenoid graphs. Finally, some open problems are also presented.
\vspace{3mm}\noindent

\baselineskip=0.30in



\section{Introduction}

Catacondensed even ring systems (CERS), which were introduced in \cite{kl-br}, are a subfamily of bipartite outerplanar graphs and have an important role in chemistry since they represent various classes of molecular graphs, i.e.\ catacondensed benzenoid graphs \cite{gucy-89}, phenylenes, $\alpha$-$4$-catafusenes \cite{cy}, cyclooctatetraenes \cite{ra}, catacondensed $C_4C_8$ systems \cite{cr-tr}, etc. In this paper, we investigate the resonance graphs of CERS, since the resonance graphs model interactions among the perfect matchings (in chemistry known as Kekul\' e structures) of a given CERS. In mathematics, resonance graphs were  introduced for benzenoid graphs by Zhang et.\ al.\ \cite{zhgu-88} under the name $Z$-transformation graphs. Moreover, this concept has been independently introduced by chemists. Later, the concept was generalized so that the resonance graph was defined and investigated also for other families of graphs \cite{che0,che1,tr-zi-3,h_zhang}.

In \cite{kl-br} it was proved that the resonance graph of a CERS is a median graph and it was described how it can be isometrically embedded into a hypercube (for the definition of a median graph see \cite{che1,kl-br}). These results were applied in \cite{br-tr-zi}, where the binary coding procedure for the perfect matchings of a CERS was established. Moreover, the concept of resonantly equivalent CERS was introduced and it was shown that if two CERS are resonantly equivalent, then their resonance graphs are isomorphic. However, the backward implication remained open and we address this problem in Section 3. More precisely, we prove that it is true for even ring chains, but not in general, since we give an example of two CERS which are not resonantly equivalent and have isomorphic resonance graphs. 

Daisy cubes were introduced in \cite{kl-mo} as a subfamily of partial cubes. It was proved in \cite{zigert} that the resonance graphs of kinky benzenoid graphs are daisy cubes. In Section 4, we generalize this result to all CERS. Moreover, we characterize the CERS whose resonance graphs are daisy cubes.

\section{Preliminaries}

An \textit{even ring system} is a simple bipartite 2-connected plane graph with all interior vertices of degree 3 and all boundary
vertices of degree 2 or 3.  An even ring system  whose inner faces are only   hexagons is called a \textit{benzenoid graph}.
\smallskip

\noindent
The \textit{inner dual} of an even ring system $G$ is a graph whose vertices are
the inner faces of $G$; two vertices are adjacent if and only if the corresponding
faces have a common edge. An even ring system is {\em catacondensed} if its
inner dual is a tree $T$; in such a case we shortly call it CERS. An inner face of a CERS is called \textit{terminal} if it corresponds to a vertex of degree one in $T$. Moreover, an even ring system is called an \textit{even ring chain} if its inner dual is a path.
\smallskip

\noindent
The graphs considered in this paper are simple, finite, and connected. The {\em distance} $d_G(u,v)$ 
between vertices $u$ and $v$ of a graph $G$ 
is defined as the usual shortest path distance. The distance between two edges $e$ and $f$ of $G$, denoted by $d_G(e,f)$, is defined as the distance between corresponding vertices in the line graph of $G$. 
\smallskip

\noindent
The {\em hypercube} $Q_n$ of dimension $n$ is defined in the following way: 
all vertices of $Q_n$ are presented as $n$-tuples $x_1x_2\ldots x_n$ where $x_i \in \{0,1\}$ for each $i \in \lbrace 1,\ldots,n \rbrace$, 
and two vertices of $Q_n$ are adjacent if the corresponding $n$-tuples differ in precisely one position. A subgraph $H$ of a graph $G$ is an  {\em isometric subgraph}
if for all $u,v\in V(H)$ it holds $d_H(u,v)=d_G(u,v)$. If a graph is isomorphic to an isometric subgraph of $G$, we say that it can be {\em isometrically embedded} in $G$. Any isometric subgraph of a hypercube is called a {\em partial cube}.
\smallskip


\noindent
A {\em 1-factor} of a graph $G$ is a
spanning subgraph of $G$ such that every vertex has degree one. The edge set of a 1-factor is called a {\em perfect matching} of $G$, which is a set of independent edges covering all vertices of $G$. In chemical literature, perfect matchings are known as Kekul\'e structures (see \cite{gucy-89} for more details). 
\smallskip

\noindent
If $F,F'$ are adjacent inner faces of a CERS $G$, then the two edges on the boundary of $F$ that
have exactly one vertex on the boundary of $F'$ are called the {\em link} from $F$ to $F'$. It was proved in \cite{kl-br} that for a given perfect matching $M$ and every link either both edges or none belong to $M$. Moreover, if $M$ is a perfect matching of $G$ such that the link from $F$ to $F'$ is contained in $M$, then we say that $G$ has the \textit{$M$-link} from $F$ to $F'$.
\smallskip

\noindent
Let $G$ be a CERS. We denote the edges lying on some face $F$ of $G$ by $E(F)$. The {\em resonance graph} $R(G)$ is the graph whose vertices are the  perfect matchings of $G$, and two perfect matchings $M_1,M_2$ are adjacent whenever their symmetric difference forms the edge set of exactly one inner face $F$ of $G$, i.e.\,$M_1 \oplus M_2 = E(F)$. In such a case we say that  $M_1$ can be obtained from $M_2$ by \textit{rotating the edges} of $F$.
\smallskip

\noindent
Finally, we repeat the algorithm from \cite{br-tr-zi} that assigns a binary code of length $n$ to every perfect matching of a CERS with exactly $n$ inner faces. For this reason, we need some additional definitions.
\smallskip

\noindent
Let $F$, $F'$, $F''$ be three inner faces of a CERS $G$ such that $F$ and $F'$ have the common edge $e$ and $F',F''$ have the common edge $f$. In this case, the triple $(F,F',F'')$ is called an \textit{adjacent triple of faces}. Moreover, the adjacent triple of faces $(F,F',F'')$ is \textit{regular} if the distance $d_G(e,f)$ is an even number and \textit{irregular} otherwise. 
\smallskip

\noindent
Let $G$ be a CERS with $n$ inner faces. Starting from an arbitrarily chosen terminal inner face $F_1$ we can assign consecutive numbers
to each inner face to get the ordering $F_1,F_2, \ldots , F_n$. 
Let $T$ be an inner dual of CERS $G$ which is a tree with $n$ vertices. The pendant vertex of $T$, which corresponds to $F_1$, is
chosen as the root of this tree. The inner faces of $G$ are
then numbered such that $F_i$ is a predecessor of $F_j$ in $T$
if and only if $i<j$. Such a numbering of inner faces is called \textit{well-ordered} and can be obtained, for example,
by the Depth-First Search algorithm (DFS) or by the
Breadth-First Search algorithm (BFS).
\smallskip

\noindent
The algorithm now reads as follows \cite{br-tr-zi}.

\begin{algorithm}[H]

    \KwIn{Graph $G$ with well-ordered numbering of inner faces $F_1,F_2, \ldots , F_n$}
    \KwOut{Binary codes for all perfect matchings of a graph $G$}
   		$S:=\{00, 01, 10\}$\\
   	 \For{k = 3, \ldots, n}{
        	$S':=\emptyset$ \\
			set $j \in \left\{ 1, \ldots, k-1\right\} $ such that $F_jF_k \in E(T)$ \\
			$i=\min\{l;F_l F_j \in E(T) \}$ \\
  \uIf{$(F_{i}, F_{j}, F_{k})$ is regular}
  {\For {each $x \in S$}{$S':=S' \cup \{x0\}$ \\
  \If{$x_j=0$}{$S':=S' \cup \{x1\}$ }
  }
  }
  \Else{\For {each $x \in S$}{$S':=S' \cup \{x0\}$ \\ 
  \If{$x_j=1$}{$S':=S' \cup \{x1\}$}
  }
  }
    $S:=S'$
  }
  \caption{Binary coding of perfect matchings of CERS}
\end{algorithm}
\bigskip

\noindent
It follows from \cite{br-tr-zi,kl-br} that this coding procedure results in an isometric embedding of the resonance graph of a CERS with $n$ inner faces into a hypercube of dimension $n$. Therefore, two perfect matchings are adjacent in the resonance graph if and only if their codes differ in precisely one position and hence, we can easily construct the resonance graph from the set of binary codes of a CERS. 
\smallskip

\noindent
Finally, we mention the following observation which will be used several times in the proofs (see \cite{br-tr-zi,kl-br} for the details). Let $M$ be a perfect matching of a CERS $G$ with $n$ inner faces and let $x$ be a binary code that corresponds to $M$. Moreover, suppose that $k \in \lbrace 2, \ldots, n \rbrace$ and that $F_j$ is the inner face with the smallest index among faces adjacent to $F_k$. Then there is an $M$-link from $F_k$ to $F_j$ if and only if $x_k=1$. 

\section{Resonantly equivalent even ring chains}

As already mentioned, it was proved in \cite{br-tr-zi} that if two CERS are resonantly equivalent, then their resonance graphs are isomorphic. In this section, we provide an example which shows that the opposite direction does not hold in general. However, we prove that it is true for even ring chains.

Firstly, we repeat some definitions and results that will be needed. If $G$ is a CERS with an inner face $F$ and the outer face $F_0$, then a connected component of the graph induced by the edges in $E(F) \cap E(F_0)$ is called a {\em boundary segment}. 
\smallskip

\noindent
\textbf{Transformation 1.} \cite{br-tr-zi} Let $G$ be a CERS and $P$ a boundary segment of $G$. A CERS $G'$ is obtained from $G$ by subdividing edges of $P$ an even number of times or reversely, smoothing an even number of vertices of $P$.

\begin{definition} \cite{br-tr-zi}
Let $G$ and $H$ be two CERS. Then $G$ is \textbf{resonantly equivalent} to $H$ if it is possible to successively apply Transformation 1 on $G$ and $H$ to obtain graphs $G'$ and $H'$, respectively, such that $G'$ and $H'$ are isomorphic. In such a case we write $G \overset{R}{\sim} H$.
\end{definition}
\noindent
For an example of two resonantly equivalent CERS, see Figure \ref{resonantly_equivalent}.

\begin{figure}[h!] 
\begin{center}
\includegraphics[scale=0.7, trim=0cm 0.5cm 0cm 0cm]{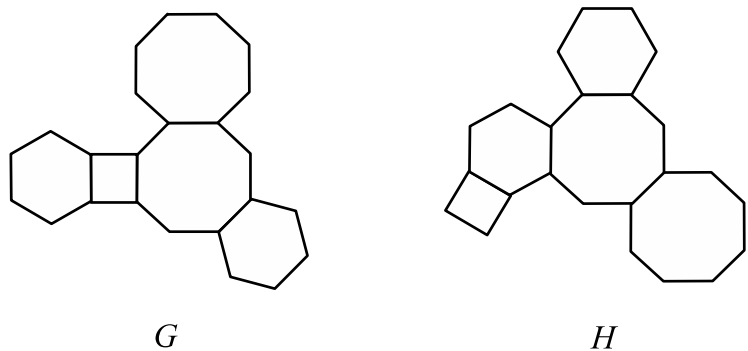}
\end{center}
\caption{\label{resonantly_equivalent} Resonantly equivalent CERS $G$ and $H$.}
\end{figure}

\noindent
The following results was proved in \cite{br-tr-zi}.

\begin{theorem} \cite{br-tr-zi} \label{glavni}
Let $G$ and $H$ be two CERS. If $G$ and $H$ are resonantly equivalent, then the resonance graph $R(G)$ is isomorphic to the resonance graph $R(H)$.
\end{theorem}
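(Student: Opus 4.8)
The plan is to reduce everything to a single application of Transformation~1 and then to show that such a step cannot alter the data on which the binary coding of Algorithm~1 depends. Since ``having isomorphic resonance graphs'' is an equivalence relation, and two isomorphic CERS trivially have isomorphic resonance graphs, it suffices to prove: if $G'$ is obtained from $G$ by one application of Transformation~1, then $R(G)\cong R(G')$. (Indeed, if $G\overset{R}{\sim}H$ there are Transformation~1 sequences $G=G_0,\dots,G_r=G'$ and $H=H_0,\dots,H_s=H'$ with $G'\cong H'$, and chaining the resulting isomorphisms gives $R(G)\cong R(G')\cong R(H')\cong R(H)$.)

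First I would isolate the invariants of a single step. Subdividing the edges of a boundary segment an even number of times --- or, reversely, smoothing an even number of degree-$2$ boundary vertices --- changes neither the number $n$ of inner faces, nor the inner dual tree $T$, nor which inner faces are terminal; only one inner face grows. Hence one may fix a well-ordered numbering $F_1,\dots,F_n$ of the inner faces of $G$ and use the very same numbering on $G'$. As recalled after Algorithm~1, $R(G)$ and $R(G')$ are, up to isomorphism, the subgraphs of $Q_n$ induced by the code sets $S(G)$ and $S(G')$ that the algorithm outputs; and an inspection of the algorithm shows that $S(\cdot)$ depends only on $n$, on $T$, and on whether each relevant adjacent triple $(F_i,F_j,F_k)$ --- with $F_jF_k\in E(T)$ and $i=\min\{l : F_lF_j\in E(T)\}$ --- is regular. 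So everything reduces to the claim that one application of Transformation~1 preserves the regularity of every such triple.

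This is the heart of the matter, and it is a parity statement about the distance between the edges $e=E(F_i)\cap E(F_j)$ and $f=E(F_j)\cap E(F_k)$, both of which lie on the even cycle $C_j$ bounding the middle face $F_j$. The facts I would exploit are: (i) in a CERS two inner faces are either edge-adjacent or vertex-disjoint, since a common vertex of two inner faces sharing no edge would be incident to four distinct edges; in particular $C_j$ is an induced cycle; (ii) since a CERS is outerplanar with a tree as inner dual, a shortest path between two vertices of $C_j$ stays within $C_j$ --- any detour into a region glued to $F_j$ can be shortcut along the single edge of attachment --- so $d_G(e,f)$, and hence its parity, is governed by the cyclic arrangement of $e$ and $f$ on $C_j$ (the two arcs of $C_j$ between $e$ and $f$ have edge-counts of equal parity because $C_j$ is even). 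Now $e$ and $f$ are links, so they never lie on the outer boundary and Transformation~1 neither subdivides nor smooths them; and a boundary segment of $F_j$ is a connected sub-path of $C_j$ containing neither $e$ nor $f$, hence lying inside one of the two arcs between them. Subdividing it an even number of times lengthens that arc by an even number while keeping the two arc edge-counts of equal parity, so $d_G(e,f)$ retains its parity; and if the boundary segment being modified belongs to a face other than $F_j$, then $C_j$ is left untouched (a shared edge of two inner faces is never on the outer boundary). Either way the regularity of $(F_i,F_j,F_k)$ is preserved, whence $S(G)=S(G')$ and $R(G)\cong R(G')$.

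I expect the parity argument of the previous paragraph --- pinning down that $d_G(e,f)$ modulo $2$ really is a function of the cyclic position of $e$ and $f$ on $C_j$, and that even subdivisions move it by an even amount --- to be the only genuinely non-routine part; the rest is bookkeeping. An alternative to the code-set argument would be to build an explicit bijection between the perfect matchings of $G$ and of $G'$: let it agree away from the lengthened boundary segment and be extended across it in the unique way compatible with each of the two forced matchings at its degree-$3$ endpoints, and then check that it preserves the ``rotate the edges of a face'' adjacency; but verifying adjacency-preservation precisely when the rotated face is the one that was lengthened reduces to essentially the same parity computation.
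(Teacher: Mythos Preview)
The paper does not actually prove this statement: Theorem~\ref{glavni} is quoted from \cite{br-tr-zi} with no argument supplied, so there is no in-paper proof to compare against. That said, your plan is sound and almost certainly close to what \cite{br-tr-zi} does, since it leans precisely on the machinery (Algorithm~1 and the isometric embedding of $R(G)$ into $Q_n$) that was developed there and is recalled in Section~2.

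Two minor comments. First, a terminological slip: the edges $e=E(F_i)\cap E(F_j)$ and $f=E(F_j)\cap E(F_k)$ are the \emph{common edges} of the adjacent faces, not links; a link from $F$ to $F'$ is the pair of edges of $F$ each having exactly one endpoint on $F'$. Your conclusion --- that $e$ and $f$ lie off the outer boundary and are therefore untouched by Transformation~1 --- is correct regardless. Second, the step you flag as the only non-routine one really is the crux: you need that $d_G(e,f)$ is realised along $C_j$, so that its parity is governed by the two arc lengths (which are congruent modulo~$2$ since $C_j$ is even). Your shortcut argument via the tree structure of the inner dual --- any excursion off $C_j$ must enter and leave through the endpoints of a single attaching edge and can therefore be replaced by that edge --- is the right justification; once it is in place, an even subdivision of a boundary segment of $F_j$ changes one arc length by an even amount and leaves the other untouched, while a subdivision on any other face leaves $C_j$ unchanged. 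Hence the regularity of every triple $(F_i,F_j,F_k)$ is preserved, Algorithm~1 outputs identical code sets, and $R(G)\cong R(G')$.
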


The next example shows that the backward implication of Theorem \ref{glavni} does not hold for all CERS. More precisely, let $G$ and $G'$ be the catacondensed benzenoid graphs from Figure \ref{same_resonance_graph}. Moreover, we order the inner faces $F_1, \ldots, F_8$ and $F_1', \ldots, F_8'$ of $G$ and $G'$, respectively, as shown in Figure \ref{same_resonance_graph}. It is clear that in such a way the inner faces are well-ordered. Let $k \in \lbrace 3,4,5,6,7,8 \rbrace$ and let $F_j$ be the face from the set $ \left\{ F_1,\ldots,F_{k-1} \right\}$ that is adjacent to $F_k$. Moreover, define the face $F_i$ as the face with the smallest index among all the adjacent inner faces of $F_j$. In addition, we define $F_j'$ and $F_i'$ analogously. We can easily notice that the triple $(F_i,F_j,F_k)$ is regular if and only if the triple $(F_i',F_j',F_k')$ is regular. As a consequence of this, by Algorithm 1 we obtain the same set of codes for graphs $G$ and $G'$. Hence, the resonance graphs $R(G)$ and $R(G')$ are isomorphic. On the other hand, it is obvious that the graphs $G$ and $G'$ are not isomorphic and can not be changed by Transformation 1 such that they become isomorphic. Therefore, $G$ and $G'$ are not resonantly equivalent. We have shown that for the graphs $G$ and $G'$ the backward implication of Theorem \ref{glavni} does not hold true.

\begin{figure}[h!] 
\begin{center}
\includegraphics[scale=0.7, trim=0cm 0.5cm 0cm 0cm]{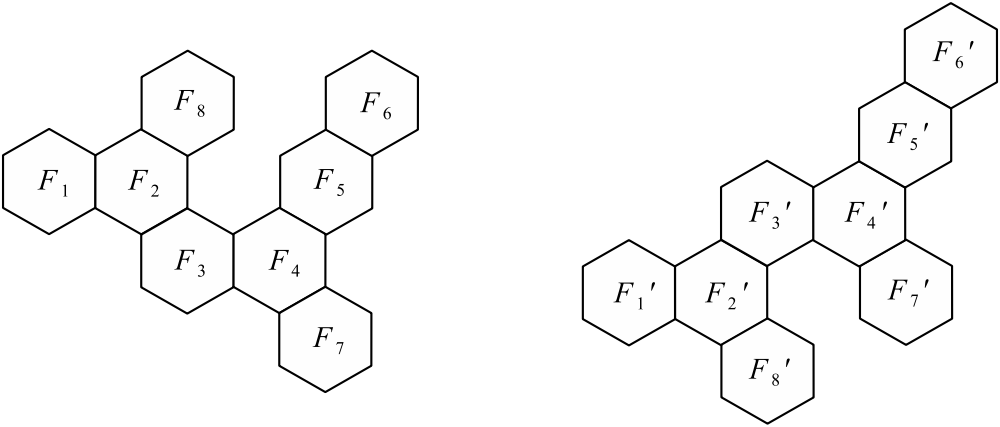}
\end{center}
\caption{\label{same_resonance_graph} Catacondensed benzenoid graphs $G$ and $G'$.}
\end{figure}

\noindent
Now we prove the main result of the section.

\begin{theorem} \label{Theorem}
If $G$ and $G'$ are two even ring chains, then $G$ and $G'$ are resonantly equivalent if and only if the resonance graphs $R(G)$ and $R(G')$ are isomorphic.
\end{theorem}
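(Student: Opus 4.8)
The forward direction is already covered by Theorem \ref{glavni}, so the entire content of the proof lies in the backward implication: assuming $R(G) \cong R(G')$ for even ring chains $G$ and $G'$, we must construct a sequence of applications of Transformation 1 turning $G$ and $G'$ into isomorphic graphs. My plan is to reduce both chains to a canonical form that is a complete invariant of the isomorphism type of the resonance graph. Since $G$ is an even ring chain, its inner dual is a path, so a well-ordered numbering $F_1, \ldots, F_n$ is essentially unique up to reversal, and for $k \ge 3$ the face $F_j$ adjacent to $F_k$ with smaller index is just $F_{k-1}$, while $F_i = F_{k-2}$. Thus the only data Algorithm 1 consumes is, for each $k \in \{3, \ldots, n\}$, whether the adjacent triple $(F_{k-2}, F_{k-1}, F_k)$ is regular or irregular; together with $n$, this sequence $r \in \{0,1\}^{n-2}$ (say $r_k = 1$ iff the triple is irregular) completely determines the set of binary codes, hence $R(G)$ up to isomorphism.

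First I would make this precise: show that the map sending an even ring chain to the pair $(n, r)$ — or rather to the equivalence class of $r$ under reversal, since renumbering from the other terminal face reverses the sequence — factors through resonant equivalence, i.e.\ Transformation 1 does not change $(n, r)$. This is because subdividing a boundary segment an even number of times changes the sizes of the rings but, by the parity bookkeeping behind the definition of regular/irregular triples (the distance $d_G(e,f)$ between consecutive links changes by an even amount), it preserves the parity of every relevant distance and leaves $n$ untouched. So $(n, r)$ up to reversal is an invariant of $\overset{R}{\sim}$.

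Next I would show the converse on the level of this combinatorial data: two even ring chains with the same $n$ and with $r$-sequences that are equal or reverses of each other are resonantly equivalent. Given the target sequence $r$, one builds a canonical chain of rings whose sizes are the smallest compatible with realizing the prescribed parities (e.g.\ a chain of quadrilaterals and hexagons arranged according to $r$), and shows any even ring chain with data $(n, r)$ can be transformed into it by smoothing down each boundary segment to minimal even size. Care is needed here to check that the boundary segments are independent enough that smoothing one does not obstruct smoothing another, and that the terminal rings behave correctly — this is where the argument is most delicate.

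Finally I would close the loop. Suppose $R(G) \cong R(G')$. The resonance graph, being a partial cube, has a well-defined isometric dimension, namely $n$; and $n$ equals the number of inner faces of the chain (one can recover $n$ from $R(G)$ as the isometric dimension, or from the number of $\Theta$-classes). So $G$ and $G'$ have the same number $n$ of rings. It then remains to argue that the isomorphism type of $R(G)$ determines the sequence $r$ up to reversal. This is the crux: I would recover $r$ from structural features of $R(G)$ — for instance, the pattern of which $\Theta$-classes are "nested" versus "laminar", or an analysis of how the $k$-th coordinate's fibers split according to the two cases of Algorithm 1 (in the regular case a code $x$ with $x_j = 0$ spawns two children, in the irregular case it is $x_j = 1$ that does). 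Concretely, fix the canonical numbering of $\Theta$-classes coming from the partial-cube structure; the number of perfect matchings, and more refined counts such as $|\{x \in S_{k-1} : x_{k-1} = 0\}|$ versus $|\{x \in S_{k-1} : x_{k-1} = 1\}|$, are read off the graph and, by induction on $k$, pin down each $r_k$ (modulo the global reversal ambiguity, which corresponds exactly to the automorphism of $R(G)$ swapping the two ends of the chain). Once $r$ is determined up to reversal, the previous paragraph gives $G \overset{R}{\sim} G'$, completing the proof. The main obstacle I anticipate is this last recovery step: showing that the combinatorial sequence $r$ is genuinely an isomorphism invariant of $R(G)$ and not merely of the labelled coding — i.e.\ ruling out an accidental coincidence of code sets arising from different $r$-sequences, which is precisely the phenomenon that does occur for general CERS (Figure \ref{same_resonance_graph}) but must be shown not to occur for chains.
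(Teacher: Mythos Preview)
Your strategy is sound and shares the decisive counting insight with the paper --- that the strict inequality between $|\{x:x_{j}=0\}|$ and $|\{x:x_{j}=1\}|$ among the codes lets one recover the regular/irregular status of each consecutive triple from the resonance graph. The paper, however, does not introduce your explicit invariant $(n,r)$ or a canonical-form representative. Instead it invokes two results from \cite{che1}: that $\Theta(R(G))$ is isomorphic to the inner dual of $G$, so an isomorphism $R(G)\cong R(G')$ directly induces a bijection $\beta$ between faces respecting adjacency; and that $R(G_r)$ arises from $R(G_{r-1})$ by peripheral convex expansion along the $\Theta$-class of the terminal face $F_r$. A short downward induction then gives $R(G_k)\cong R(G_k')$ for \emph{every} $k$ (Lemma~\ref{res_iso}), which is precisely the step you flag as the main obstacle; once that is available, the vertex count $|X_0|>|X_1|$ forces the regularity of $(F_{r-1},F_r,F_{r+1})$ to match that of $(F_{r-1}',F_r',F_{r+1}')$, and a forward induction (Lemma~\ref{res_eq}) yields $G_k\overset{R}{\sim}G_k'$ directly, without ever reducing to a normal form. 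So your route is in principle more self-contained (it avoids the machinery of \cite{che1}), but the place where you appeal to a ``canonical numbering of $\Theta$-classes coming from the partial-cube structure'' is exactly where the paper imports the identification $\Theta(R(G))\cong T$ to obtain a linear order on the $\Theta$-classes along which to run the induction --- without that, your counts $|\{x\in S_{k-1}:x_{k-1}=0\}|$ are not well-defined from the abstract graph, and you would need to supply an independent argument for the path structure on the $\Theta$-classes.
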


\subsection{Proof of Theorem \ref{Theorem}}

In this proof, we use some concepts and results from \cite{che1}. Therefore, the corresponding definitions (relation $\Theta$ on the edges of a graph, the graph $\Theta(R(G))$, reducible face, peripheral convex expansion) can be found in that paper.

It follows by Theorem \ref{glavni} that if $G$ and $G'$ are resonantly equivalent, then the resonance graphs $R(G)$ and $R(G')$ are isomorphic. Suppose now that $G$ and $G'$ are two even ring chains with isomorphic resonance graphs, i.e.\ $R(G) \cong R(G')$. By Theorem 3.4 in \cite{che1}, the graph $\Theta(R(G))$ is isomorphic to the inner dual $T$ of $G$ and the graph $\Theta(R(G'))$ is isomorphic to the inner dual $T'$ of $G'$. Let $\alpha: V(R(G)) \rightarrow V(R(G'))$ be an isomorphism. Obviously, $\alpha$ gives us also the corresponding isomorphism between $\Theta(R(G))$ and $\Theta(R(G'))$, which leads to the isomorphism $\beta: V(T) \rightarrow V(T')$. Therefore, $G$ and $G'$ have the same number of inner faces, let say $n$. It is clear that the theorem holds if $n=1$. Hence, we can assume $n \geq 2$.

Next, we can assign numbers to each inner face of $G$ to get the ordering $F_1,F_2, \ldots , F_n$. A terminal face is chosen as $F_1$, and then we assign the numbers to other faces such that adjacent faces get consecutive numbers. 
As a consequence, let $F_1', \ldots, F_n'$ be the corresponding ordering of inner faces of $G'$. More precisely, for each $i \in \lbrace 1, \ldots, n \rbrace$, let $F_i'=\beta(F_i)$.

By $G_k$ we denote the subgraph of $G$ induced by the faces $F_1, \ldots, F_k$, where $k \in \lbrace 1, \ldots, n \rbrace$. In addition, we define $G_k'$ in the same way. Moreover, for any $k \in \lbrace 1, \ldots, n \rbrace$, let $T_k$ and $T_k'$ be the inner duals of $G_k$ and $G_k'$, respectively. Firstly, we prove the following lemma.

\begin{lemma} \label{res_iso}
The resonance $R(G_k)$ is isomorphic to the resonance graph $R(G_k')$ for any $k \in \lbrace 1, \ldots, n \rbrace$.
\end{lemma}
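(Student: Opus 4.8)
\noindent
The plan is to prove Lemma~\ref{res_iso} by \textbf{downward induction on $k$}, starting from the hypothesis $R(G_n)=R(G)\cong R(G')=R(G_n')$ and, at each step, peeling off the terminal face $F_k$ of the even ring chain $G_k$ together with the terminal face $F_k'$ of $G_k'$. Note first that each $G_k$ is again an even ring chain — a $2$-connected bipartite outerplanar graph of maximum degree $3$ whose inner dual is the subpath $F_1F_2\cdots F_k$ of $T$ — so all the cited results apply to it; in particular Theorem 3.4 in \cite{che1} applied to $G_k$ gives $\Theta(R(G_k))\cong T_k$. Since $F_k$ is a pendant vertex of $T_k$, the description of the resonance graph of a CERS as an iterated peripheral convex expansion along its inner dual tree (from \cite{kl-br}, see also \cite{che1}) tells us that $R(G_k)$ is obtained from $R(G_{k-1})$ by a peripheral convex expansion whose new $\Theta$-class is a reducible face of $R(G_k)$ and, under the isomorphism $\Theta(R(G_k))\cong T_k$, corresponds precisely to $F_k$; reducing this reducible face returns a graph isomorphic to $R(G_{k-1})$. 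The analogous statements hold with primes throughout: since $F_i'=\beta(F_i)$ and $\beta\colon T\to T'$ is a graph isomorphism, $T'=F_1'F_2'\cdots F_n'$ is a path and $F_k'$ is terminal in $G_k'$.

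To run the induction I would carry along the stronger assertion that there is an isomorphism $\phi_k\colon R(G_k)\to R(G_k')$ which, for every $i\in\{1,\dots,k\}$, maps the $\Theta$-class of $R(G_k)$ associated with $F_i$ onto the $\Theta$-class of $R(G_k')$ associated with $F_i'$. For $k=n$ this is exactly what the construction of $\beta$ says: passing through $\Theta(R(G))\cong T$ and $\Theta(R(G'))\cong T'$, the isomorphism $\alpha$ sends the $\Theta$-class of $F_i$ to that of $\beta(F_i)=F_i'$, so $\phi_n=\alpha$ works. For the inductive step, given $\phi_k$ with $k\ge 2$, let $\Theta_k$ be the $\Theta$-class of $F_k$ in $R(G_k)$; it is a reducible face of $R(G_k)$ because $F_k$ is terminal in $G_k$, and $\phi_k(\Theta_k)$ is the $\Theta$-class of $F_k'$ in $R(G_k')$, which is a reducible face of $R(G_k')$ because $F_k'$ is terminal in $G_k'$. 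Reducing $\Theta_k$ in $R(G_k)$ and $\phi_k(\Theta_k)$ in $R(G_k')$ yields graphs isomorphic to $R(G_{k-1})$ and $R(G_{k-1}')$ respectively, and because $\phi_k$ carries the first reducible face onto the second it descends to an isomorphism $\phi_{k-1}\colon R(G_{k-1})\to R(G_{k-1}')$. Finally, for each $i\le k-1$ the $\Theta$-class of $R(G_{k-1})$ labelled by $F_i$ is the image under reduction of the $\Theta$-class of $R(G_k)$ labelled by $F_i$ (and likewise with primes), so $\phi_{k-1}$ again respects the face-to-$\Theta$-class correspondence; the induction thus proceeds down to $k=1$, where anyway $R(G_1)\cong K_2\cong R(G_1')$.

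The step that needs the most care — and which I regard as the main obstacle — is the claim that $\phi_k$ \emph{descends} through the face reduction. To justify this cleanly I would invoke from \cite{che1} that reducing a reducible $\Theta$-class $\Theta_k$ amounts to identifying the two convex sides of $\Theta_k$ along the perfect matching formed by the edges of $\Theta_k$, so that the reduced graph is canonically determined by the abstract pair $(R(G_k),\Theta_k)$; consequently any isomorphism carrying $(R(G_k),\Theta_k)$ onto $(R(G_k'),\phi_k(\Theta_k))$ automatically intertwines the two reductions and induces the required $\phi_{k-1}$. The only other point requiring attention is the verification that reducing $R(G_k)$ along the $\Theta$-class of the terminal face $F_k$ really produces $R(G_k-F_k)=R(G_{k-1})$, which is part of the structure theory of resonance graphs of CERS and can be quoted from \cite{kl-br,che1}.
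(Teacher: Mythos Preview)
Your proposal is correct and follows essentially the same approach as the paper: downward induction on $k$, using that $R(G_k)$ is obtained from $R(G_{k-1})$ by a peripheral convex expansion whose new $\Theta$-class corresponds to the terminal face $F_k$ (Theorems~3.2 and~3.4 in \cite{che1}), and then reversing this expansion. Your strengthened inductive hypothesis --- carrying along an isomorphism $\phi_k$ that respects the face-to-$\Theta$-class correspondence --- makes explicit what the paper leaves implicit when it asserts that the $\Theta$-classes $E$ and $E'$ ``induce isomorphic subgraphs'', and is exactly what is needed to justify that the two face reductions match up.
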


\begin{proof}
Obviously, the result holds for $k=n$. Therefore, suppose that $R(G_r) \cong R(G_r')$ for some $r \in \lbrace 2, \ldots, n \rbrace$. We now show that $R(G_{r-1}) \cong R(G_{r-1}')$, which will prove the desired result.

Obviously, the inner faces $F_r$ and $F_r'$ are reducible faces in $G_r$ and $G_r'$, respectively. With other words, the vertices $F_r$ and $F_r'$ are the leaves in $T_r$ and $T_r'$. Moreover, let $E$ and $E'$ be the $\Theta$-classes in $R(G_r)$ and $R(G_r')$ corresponding to the faces $F_r$ and $F_r'$, respectively. By Theorem 3.2 in \cite{che1} we know that the resonance graph $R(G_r)$ can be obtained from $R(G_{r-1})$ by the peripheral convex expansion where the additional $\Theta$-class $E$ is obtained. On the other hand, the resonance graph $R(G_r')$ can be obtained from $R(G_{r-1}')$ by the peripheral convex expansion where the additional $\Theta$-class $E'$ is obtained. Since the $\Theta$-classes $E$ and $E'$ induce isomorphic subgraphs in $R(G_r)$ and $R(G_r')$, we obtain that the graphs $R(G_{r-1})$ and $R(G_{r-1}')$ are isomorphic. \qed
\end{proof}

\noindent
The next lemma will finish our proof.
\begin{lemma} \label{res_eq}
Graph $G_k$ is resonantly equivalent to $G_k'$ for any $k \in \lbrace 1, \ldots, n \rbrace$. 
\end{lemma}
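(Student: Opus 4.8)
The plan is to prove Lemma~\ref{res_eq} by induction on $k$, following the same scaffolding as Lemma~\ref{res_iso} but now tracking how the two chains are built face-by-face. For $k=1$ both $G_1$ and $G_1'$ are single even cycles; the isomorphism $\beta$ together with $R(G_1)\cong R(G_1')$ forces the two cycles to have the same length (the number of perfect matchings of a $2m$-cycle is $2$, so this does not suffice by itself; instead note that $R(G_1)$ is a single edge and the cycle lengths are governed by the boundary segments, which can be made equal by Transformation~1 since subdividing a boundary segment an even number of times changes an even cycle into another even cycle of arbitrary larger even length). So $G_1 \overset{R}{\sim} G_1'$. For the inductive step, assume $G_{r-1}\overset{R}{\sim} G_{r-1}'$ for some $r\in\{2,\ldots,n\}$; I want to conclude $G_r \overset{R}{\sim} G_r'$.

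The key step is to understand how $G_r$ is obtained from $G_{r-1}$ as a graph operation, and to show that the combinatorial data distinguishing different such operations is exactly the data recorded by the $\Theta$-class $E$ in $R(G_r)$ — which, by Lemma~\ref{res_iso} and the fact that $\alpha$ restricts to an isomorphism $R(G_r)\to R(G_r')$ carrying $E$ to $E'$, is the same for $G_r$ and $G_r'$ up to that isomorphism. Concretely, since $G$ and $G'$ are \emph{chains}, the inner duals are paths, so $F_r$ is attached to $F_{r-1}$ along a single link, and the new face $F_r$ is an even cycle glued to $G_{r-1}$ along one boundary segment of $F_{r-1}$. The relevant parameters are: (i) the length of $F_r$, and (ii) the position of attachment along the boundary of $F_{r-1}$, which is encoded by whether the adjacent triple $(F_{r-2},F_{r-1},F_r)$ is regular or irregular (for $r\geq 3$), together with the lengths of the two boundary segments of $F_{r-1}$ into which the link from $F_{r-1}$ to $F_{r-2}$ and the link from $F_{r-1}$ to $F_r$ divide $\partial F_{r-1}$. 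I would argue, using Theorem~3.2 of \cite{che1} (peripheral convex expansion) and the observation at the end of Section~2 relating $x_r=1$ to the $M$-link from $F_r$ to its lowest-indexed neighbour, that the isomorphism $\alpha$ forces $(F_{r-2},F_{r-1},F_r)$ to be regular iff $(F_{r-2}',F_{r-1}',F_r')$ is regular (this is precisely the mechanism behind the sign in Algorithm~1, which governs which codes survive the expansion). Once the regular/irregular type matches, any remaining discrepancy between $G_r$ and $G_r'$ lies only in the lengths of boundary segments — of $F_r$ itself and of the segments of the already-equivalent graphs $G_{r-1}$, $G_{r-1}'$ adjacent to the attachment — and all such discrepancies are removable by Transformation~1, since subdividing a boundary segment an even number of times preserves bipartiteness and the even-cycle condition and does not affect the inner dual or any adjacent triple's parity.

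Assembling these pieces: take the sequence of Transformation~1 moves witnessing $G_{r-1}\overset{R}{\sim}G_{r-1}'$ (more precisely, yielding a common graph after applying it to both), perform the analogous moves inside $G_r$ and $G_r'$ (they are valid because they act on boundary segments that remain boundary segments after adding one more terminal face, as $F_r$ is terminal in $G_r$), then use additional Transformation~1 moves to equalise the length of $F_r$ with that of $F_r'$ and to align the two boundary segments of $F_{r-1}$ (resp.\ $F_{r-1}'$) created by the attachment. Since the regular/irregular types agree, the resulting graphs are isomorphic, so $G_r \overset{R}{\sim} G_r'$. Taking $k=n$ gives $G=G_n \overset{R}{\sim} G_n'=G'$, completing the proof of Theorem~\ref{Theorem}.

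I expect the main obstacle to be step (ii): making rigorous the claim that the isomorphism $\alpha$ (equivalently, the induced tree isomorphism $\beta$ together with the matching of $\Theta$-classes) really does pin down the regular/irregular type of each adjacent triple, and that nothing else about the embedding can vary except boundary-segment lengths. This requires carefully invoking the structure of the peripheral convex expansion in \cite{che1} and linking it back to the branching in Algorithm~1 — in particular verifying that two even ring chains with the same sequence of adjacent-triple parities are automatically resonantly equivalent, which is essentially a converse reading of the code-generation argument used in the example of Figure~\ref{same_resonance_graph}, now made to work because for chains the inner dual has no branching and hence no ambiguity of the kind exploited there to defeat the benzenoid converse.
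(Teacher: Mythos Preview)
Your scaffolding is exactly the paper's: induction on $k$, with base cases $k=1,2$ trivial and the inductive step reducing to showing that $(F_{r-1},F_r,F_{r+1})$ is regular iff $(F_{r-1}',F_r',F_{r+1}')$ is. The gap is precisely the one you flag yourself at the end: you never actually establish this parity matching, only point toward peripheral convex expansions and Algorithm~1 and say the obstacle is ``making rigorous the claim.'' Invoking Theorem~3.2 of \cite{che1} does not by itself pin down the parity, because the expansion theorem tells you $R(G_{r+1})$ is \emph{some} peripheral convex expansion of $R(G_r)$ but does not, without further work, tell you which one.

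The paper closes this gap with a short counting argument that bypasses any structural analysis of $\Theta$-classes. Set
\[
X_0=\{x\in V(R(G_r)) : x_r=0\},\qquad X_1=\{x\in V(R(G_r)) : x_r=1\}.
\]
By Algorithm~1, $X_0$ is in bijection with $V(R(G_{r-1}))$, while $|X_1|<|X_0|$ since $R(G_{r-1})$ always contains codes ending in $0$ and codes ending in $1$. Passing to $G_{r+1}$, every code acquires an appended $0$, and in addition each $x$ with $x_r=0$ (regular case) or each $x$ with $x_r=1$ (irregular case) also acquires an appended $1$. Hence $|V(R(G_{r+1}))|=|V(R(G_r))|+|X_0|$ or $|V(R(G_r))|+|X_1|$ according to the parity. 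By Lemma~\ref{res_iso} the quantities $|V(R(G_r))|$, $|V(R(G_{r-1}))|$ (and hence $|X_0|$, $|X_1|$) coincide on the primed side, so a parity mismatch forces $|V(R(G_{r+1}))|\neq|V(R(G_{r+1}'))|$, contradicting Lemma~\ref{res_iso}. This single paragraph replaces all of your step~(ii).

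A secondary wrinkle in your sketch: you propose lifting the Transformation~1 moves witnessing $G_{r-1}\overset{R}{\sim}G_{r-1}'$ into $G_r$, claiming the relevant boundary segments ``remain boundary segments.'' This is not quite true for the segment of $F_{r-1}$ where $F_r$ is glued, which splits into two pieces. The paper avoids this by arguing contrapositively (if not resonantly equivalent then parities differ) rather than by explicitly constructing moves, implicitly using that two chains with the same sequence of regular/irregular types are resonantly equivalent --- which follows directly from the definition once you adjust boundary-segment lengths.
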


\begin{proof}
The statement obviously holds for $k=1$ and $k=2$. Therefore, suppose that $G_r$ is resonantly equivalent to $G_r'$ for any $r$, $2 \leq r \leq n-1$. Now it suffices to show that $G_{r+1}$ and $G_{r+1}'$ are resonantly equivalent. 

Suppose that $G_{r+1}$ and $G_{r+1}'$ are not resonantly equivalent. It follows that one of the triples $(F_{r-1},F_r,F_{r+1})$, $(F_{r-1}',F_r',F_{r+1}')$ is regular and the other one is irregular. Without loss of generality we can assume that the triple $(F_{r-1},F_r,F_{r+1})$ is regular.  Next, we define
$$X_0 = \lbrace x \in V(R(G_r)) \,|\, x_r =0 \rbrace,$$
$$X_1 = \lbrace x \in V(R(G_r)) \,|\, x_r =1 \rbrace.$$

Note that in the above definition, vertices of the resonance graph are identified with their binary codes. According to Algorithm 1, we can easily see that there is a bijective correspondence between the vertices from $X_0$ and the vertices of $R(G_{r-1})$. On the other hand, the number of elements in $X_1$ is strictly smaller than the number of vertices in $R(G_{r-1})$ (since we always have some binary codes ending by 0 and some codes ending by 1). Hence $|X_1| < |X_0|$. The number of vertices $x \in V(G_{r+1})$ with $x_{r+1}=1$ equals $|X_0|$. On the other hand, the number of vertices $x \in V(G_{r+1}')$ with $x_{r+1}=1$ equals $|X_1|$. Since there is the same number of vertices in $R(G_{r+1})$ and $R(G_{r+1}')$ with $0$ in the last position, we now deduce that $|V(R(G_{r+1}))| > |V(R(G_{r+1}'))|$. Therefore, the resonance graphs $R(G_{r+1})$ and $R(G_{r+1}')$ are not isomorphic, which is a contradiction to Lemma \ref{res_iso}. We have now proven that $G_{r+1}$ and $G_{r+1}'$ are resonantly equivalent. \qed
\end{proof}

The proof of Theorem \ref{Theorem} obviously follows by Lemma \ref{res_eq}, since $G=G_n$ and $G'=G_n'$ are resonantly equivalent. \qed


\section{Resonance graphs of regular CERS are daisy cubes}

  If  $G$ is a graph and
$X \subseteq  V (G)$, then $\langle X \rangle$ denotes the subgraph of $ G$ induced by $X$. Further,  for string  $u$  of length $n$ over $B=\{0, 1\}$, i.e., $u = (u_1, \ldots, u_n) \in  B^n$
we will briefly write $u$ as $u_1\ldots  u_n$. 
Let $\leq$ be a partial order on $B^n$ defined with $u_1 \ldots u_n\leq  v_1 \ldots v_n $ if $u_i \leq v_i$ holds for all
$i \in  \lbrace 1, \ldots, n \rbrace$. For $X \subseteq B^n$ we define the graph $Q_n(X)$ as the subgraph of $Q_n$ with
$Q_n(X) = \left\langle  \{u \in B^n ; u \leq x \,\text{for some} \, x \in X\} \right\rangle $
and say that $Q_n(X)$ is a {\em daisy cube} (generated by $X$).

 The  {\em interval} $I_G(u,v)$ for two vertices $u$ and $v$ of a graph $G$ is the set of all vertices that lie on any shortest path between $u$ and $v$. The authors of the seminal paper \cite{kl-mo} on daisy cubes observed the following; if
 $\widehat{X}$ is the antichain consisting of the maximal elements of the poset $(X, \leq)$, then  $Q_n(\widehat{X})=Q_n(X)$;  further
$Q_n(X)= \left\langle \cup_{x \in \widehat{X}} I_{Q_n}(0^n,x)  \right\rangle$ (here $0^n$ denotes the string composed of $n$ zeros).\\
Note that for  a CERS $G$  with $ n$ well-ordered inner faces $F_1,\ldots, F_n$, the set of all binary labels of perfect matchings of $G$, denoted as ${\cal L}(G)$,  together
with a partial order $\leq$ is also a poset  $({\cal L}(G),\leq)$, since  ${\cal L}(G)$ is a subset of $\{0,1\}^n$.

\begin{definition} 
If a CERS $G$ has at most two inner faces or if every adjacent triple of faces of  $G$ is regular, then $G$ is a  \textbf{regular}  CERS.
\end{definition}

\noindent
The following theorem is the main result of the section.

\begin{theorem} \label{glavni_daisy}
If $G$ is a CERS, then $G$ is regular if and only if the resonance graph $R(G)$ is a daisy cube.
\end{theorem}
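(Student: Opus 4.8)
The plan is to prove both implications by tracking how Algorithm 1 builds the label set $\mathcal{L}(G)$, and relating the ``downward closure'' property of daisy cubes to the regularity of adjacent triples. Recall that $R(G)$ is always a partial cube (it embeds isometrically in $Q_n$ via Algorithm 1), so $R(G) = Q_n(\mathcal{L}(G))$ if and only if $\mathcal{L}(G)$ is a down-set in the poset $(\{0,1\}^n,\leq)$, i.e., whenever $x \in \mathcal{L}(G)$ and $y \leq x$ then $y \in \mathcal{L}(G)$.

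First I would establish the forward direction: if $G$ is regular, then $\mathcal{L}(G)$ is downward closed. I would argue by induction on $k$ that $\mathcal{L}(G_k)$ (the label set produced after step $k$ of the algorithm) is a down-set in $\{0,1\}^k$. The base case $S = \{00,01,10\}$ is a down-set in $\{0,1\}^2$. For the inductive step, when $(F_i,F_j,F_k)$ is regular the algorithm replaces each $x \in S$ by $x0$, and additionally adds $x1$ precisely when $x_j = 0$. The key observation is that this ``regular'' rule preserves downward closure: suppose $y \leq z$ with $z \in \mathcal{L}(G_k)$. Write $z = z'c$ and $y = y'b$ with $b \le c$. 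Then $y' \leq z'$, and by the induction hypothesis $y' \in \mathcal{L}(G_{k-1})$; if $b = 0$ we are done since every $x \in \mathcal{L}(G_{k-1})$ yields $x0$; if $b = 1$ then $c = 1$, so $z'_j = 0$, hence $y'_j \leq z'_j = 0$, so $y'_j = 0$ and therefore $y'1 \in \mathcal{L}(G_k)$. This completes the induction, so $\mathcal{L}(G) = \mathcal{L}(G_n)$ is a down-set and $R(G)$ is a daisy cube.

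For the converse I would prove the contrapositive: if $G$ is not regular, then $R(G)$ is not a daisy cube. Since $G$ is not regular it has at least three inner faces and some irregular adjacent triple $(F_i,F_j,F_k)$ with $F_jF_k \in E(T)$ and $i = \min\{l : F_lF_j \in E(T)\}$. At step $k$ of the algorithm the ``irregular'' rule adds $x1$ only when $x_j = 1$. I would exhibit a label $z \in \mathcal{L}(G)$ with $z_k = 1$ (equivalently, a perfect matching with an $M$-link from $F_k$ to $F_j$) together with the witness that the vector obtained from $z$ by flipping the $k$th coordinate to $0$ lies in $\mathcal{L}(G)$ while some $y \leq z$ obtained by also zeroing out coordinate $j$ does not — concretely, take any $x \in \mathcal{L}(G_{k-1})$ with $x_j = 1$ (such $x$ exists because face $F_j$ is not terminal, so some perfect matching of $G_{k-1}$ has an $M$-link into its lowest-indexed neighbor), extend it to $x1 \in \mathcal{L}(G_k)$, then extend through the remaining faces $F_{k+1},\dots,F_n$ to a full label $z$; the vector $y$ obtained from $z$ by setting coordinates $j$ and $k$ to $0$ satisfies $y \leq z$ but $y \notin \mathcal{L}(G)$, because any label of $\mathcal{L}(G_k)$ with $k$th coordinate $1$ must have $j$th coordinate $1$ by the irregular rule, and this constraint is inherited by all extensions. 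Hence $\mathcal{L}(G)$ is not a down-set and $R(G)$ is not a daisy cube.

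The main obstacle is the bookkeeping in the converse: I must make sure that after the offending step $k$, the constraint ``$x_k = 1 \Rightarrow x_j = 1$'' genuinely survives as a constraint on all of $\mathcal{L}(G)$ (i.e., is not accidentally relaxed when later faces are processed), and that a label $z$ with $z_k = 1$ actually exists — the latter requires knowing that $F_j$ has another neighbor besides $F_i$ in $G_{k-1}$, or more carefully, that some perfect matching of $G$ realizes the $M$-link from $F_k$ to $F_j$, which follows from general facts about perfect matchings of CERS. Making these extension arguments precise, while keeping the induction clean in the forward direction, is where the real work lies; once set up, both directions reduce to the simple combinatorial behavior of the two branches of Algorithm 1.
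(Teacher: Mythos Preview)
Your forward direction is correct and is in fact more direct than the paper's: the paper proves that regular CERS have daisy-cube resonance graphs by developing a correspondence between maximal resonant sets and maximal elements of $(\mathcal{L}(G),\le)$ through a chain of lemmas, whereas your induction on the steps of Algorithm~1 verifies downward closure of $\mathcal{L}(G_k)$ immediately from the shape of the regular branch. That alternative is genuinely nice.

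The converse, however, has a slip and a real gap.

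The slip: your witness $y$ is wrong. You take $z$ with $z_j=z_k=1$ and then set \emph{both} coordinates $j$ and $k$ to $0$; but your own justification (``any label with $k$th coordinate $1$ must have $j$th coordinate $1$'') is vacuous for such a $y$, since $y_k=0$. The vector you want is the one obtained from $z$ by zeroing only coordinate $j$ while keeping $z_k=1$: then $y\le z$ and $y\notin\mathcal{L}(G)$ because its restriction to the first $k$ coordinates violates the irregular rule.

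The gap: even with that fix, showing that $\mathcal{L}(G)$ is not a down-set in the \emph{particular} embedding produced by Algorithm~1 does not, by itself, show that $R(G)$ fails to be a daisy cube. Being a daisy cube is an isomorphism-invariant property, and isometric embeddings of a partial cube into $Q_n$ are unique only up to automorphisms of $Q_n$, which include coordinate flips. A set can fail to be a down-set in one embedding yet become one after flips: for instance $\{00,01,11\}\subseteq Q_2$ is not a down-set, but it induces $P_3$, which \emph{is} a daisy cube. So you would still need an argument linking ``$\mathcal{L}(G)$ is not a down-set'' to ``no re-embedding makes it one''. The paper sidesteps this entirely by exhibiting an isomorphism-invariant obstruction: it shows that any non-regular CERS produces in $R(G)$ an induced path $v_1v_2v_3v_4$ such that neither $v_1,v_3$ nor $v_2,v_4$ have a second common neighbour, and separately that every daisy cube satisfies the opposite property for all such paths. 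Your down-set argument could be completed, but only after either proving a uniqueness statement for the daisy-cube root in $R(G)$ or translating the failure of downward closure into such an embedding-free obstruction.
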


\subsection{Proof of the first implication of Theorem \ref{glavni_daisy}}

In this subsection we show that if $G$ is a regular CERS, then the resonance graph $R(G)$ is a daisy cube.

An inner   face $F$ of a CERS $G$ is {\em $M$-alternating} if the edges of   $E(F)$ appear
alternately in and off the perfect matching $M$. Set $S$ of disjoint inner faces of $G$ is a 
{\em resonant set} of $G$  if there exists a perfect matching $M$  such that  all faces  in $S$ are $M$-alternating. 
We can assign in a natural way the  binary label to a resonant set of a CERS.
\begin{definition}
Let $G$ be a CERS with  $n$  well-ordered inner faces  $F_i$, $i \in \lbrace 1, \ldots, n \rbrace$.
 If $S$  is a resonant set of $G$, then its \textbf{binary representation} $b(S)$ is a binary string of length $n$ where
 $$b (S)_i=\left\{ \begin{array}{rcc}  
                1 & ; & F_i\in S,\\
                0 & ; & otherwise\,.
				 \end{array}  \right.$$
\end{definition}

\noindent
In order to prove our main result, we need some lemmas.

\begin{lemma}
Let $G$ be a regular CERS, $F$ an inner face,  and $M$ a perfect matching of $G$. 
Then $F$ has the $M$-link to every adjacent inner face or to none of them. 
\label{lema0}
\end{lemma}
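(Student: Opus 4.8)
The plan is to analyse the restriction of $M$ to the boundary cycle of $F$. Write $C = v_1 v_2 \ldots v_{2m}$ for this cycle. If $F$ is adjacent to at most one inner face there is nothing to prove, so assume $F$ is adjacent to $r \ge 2$ inner faces $F^{(1)}, \ldots, F^{(r)}$ whose common edges with $F$ are $e^{(1)}, \ldots, e^{(r)}$, listed in the cyclic order in which they appear on $C$. The first step is a reduction: it suffices to show that for any two cyclically consecutive neighbours $F'$, $F''$ of $F$, the face $F$ has the $M$-link to $F'$ if and only if $F$ has the $M$-link to $F''$; chaining this equivalence around $F^{(1)}, \ldots, F^{(r)}$ then forces all $r$ of the conditions ``$F$ has the $M$-link to $F^{(i)}$'' to coincide, which is exactly the statement.

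So fix two consecutive neighbours and denote their common edges with $F$ by $e' = v_a v_{a+1}$ and $e'' = v_b v_{b+1}$ (indices read modulo $2m$), chosen so that on the arc $A$: $v_{a+1}, v_{a+2}, \ldots, v_b$ of $C$ no common edge of $F$ other than $e', e''$ occurs. I would first observe that every vertex $v_{a+2}, \ldots, v_{b-1}$ in the interior of $A$ has degree $2$: if some such $v_c$ had degree $3$, it would lie on a second inner face of $G$, and that face would be adjacent to $F$ along one of the two edges $v_{c-1}v_c$, $v_c v_{c+1}$ in the interior of $A$, contradicting the choice of $A$. Hence each interior vertex of $A$ is matched by $M$ through an edge of $A$, which pins down $M \cap E(A)$ almost completely.

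Next I would use the fact (proved in \cite{kl-br}) that each link lies entirely in $M$ or is disjoint from $M$. The link from $F$ to $F'$ consists of the two edges of $C$ flanking $e'$, one of which is $v_{a+1}v_{a+2}$; so $F$ has the $M$-link to $F'$ precisely when $v_{a+1}v_{a+2} \in M$, and symmetrically $F$ has the $M$-link to $F''$ precisely when $v_{b-1}v_b \in M$. Now trace $M$ along $A$ starting from the $v_{a+1}$-end: since every interior vertex of $A$ must be matched inside $A$, the edge $v_{a+1}v_{a+2}$ being in (respectively, not in) $M$ forces $M \cap E(A)$ into the unique ``every second edge'' pattern, and reading this pattern off at the $v_b$-end shows that $v_{a+1}v_{a+2} \in M \Leftrightarrow v_{b-1}v_b \in M$ when the arc $A$ has even length, while exactly one of the two edges lies in $M$ when $A$ has odd length (the two degenerate short arcs, with no interior vertex or with one interior vertex, obey the same rule and are checked by inspection). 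Finally, the adjacent triple $(F', F, F'')$ is regular because $G$ is a regular CERS, and regularity of this triple means that $d_G(e', e'')$ is even; since this distance is realised along $C$ and the two arcs of $C$ joining $e'$ and $e''$ have lengths summing to $2m$, the arc $A$ has even length. Combining these facts yields that $F$ has the $M$-link to $F'$ if and only if it has the $M$-link to $F''$, which finishes the argument.

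The step I expect to be the main obstacle is making the ``every second edge'' analysis on $A$ fully rigorous: one must combine the forced matching of the degree-$2$ interior vertices with a careful treatment of the two endpoints of $A$ (each a degree-$3$ vertex shared with $F'$ or $F''$, which may be matched along an edge of $A$, along the common edge, or along its third edge) and with the parity of the length of $A$, handling the short arcs separately. A secondary point worth spelling out is that $d_G(e', e'')$, the quantity appearing in the definition of a regular triple, is indeed governed by the boundary cycle of $F$ and not by some shortcut through the remainder of $G$; this is where the outerplanarity of $G$ and the tree structure of its inner dual enter.
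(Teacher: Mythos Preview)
Your argument is exactly the paper's, expanded in full detail: the paper disposes of the lemma in a single sentence, observing that in a regular CERS every boundary segment of $F$ has an even number of vertices, so an $M$-link at one end of such a segment forces an $M$-link at the other end; your reduction to consecutive neighbours and the ``every second edge'' trace along the arc $A$ is precisely that observation spelled out. One small caution on bookkeeping: the correct parity statement is that the boundary segment $A=v_{a+1},\ldots,v_b$ has an \emph{even number of vertices} (equivalently $d_G(e',e'')\equiv b-a$ is even), hence an \emph{odd} number of edges, so be sure your phrase ``$A$ has even length'' is read in that sense when you write the final version.
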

\begin{proof}
If $F$ has an $M$-link to some adjacent inner face $F'$ then, since $G$ is a regular CERS, every boundary segment has an even number of vertices and consequently, there is
an $M$-link from $F$  to every inner face adjacent to $F$. \qed 
\end{proof}

\begin{lemma}
Let $G$ be a regular CERS and let $S$ be a maximal resonant set
of $G$. If $ F$ is an inner face of $G$ that does not belong to $S$, then one of its
adjacent faces belongs to $S$.
\label{lema1}
\end{lemma}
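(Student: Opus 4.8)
The plan is to argue by contradiction: suppose $F \notin S$ and none of the faces adjacent to $F$ belongs to $S$ either. The goal is to produce a resonant set strictly larger than $S$, contradicting maximality. The natural candidate is $S \cup \{F\}$, so the real task is to show that $S \cup \{F\}$ is itself a resonant set, i.e. that there is a single perfect matching $M'$ for which every face of $S \cup \{F\}$ is $M'$-alternating. First I would fix a perfect matching $M$ witnessing that $S$ is resonant. The faces in $S$ are pairwise disjoint (by definition of a resonant set) and, by assumption, $F$ is disjoint from all of them as well, so $S \cup \{F\}$ is still a set of pairwise disjoint inner faces; the only thing to check is the alternating condition.

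The key step is to understand the status of $F$ with respect to $M$. By the link property recalled in Section 2 (for every perfect matching and every link, both link-edges or neither lie in the matching) together with Lemma \ref{lema0}, since $G$ is regular, $F$ either has the $M$-link to \emph{every} adjacent inner face or to \emph{none} of them. I would split into these two cases. In the second case ($F$ has no $M$-link to any neighbour), I claim $F$ is already $M$-alternating: the edges of $E(F)$ that are shared with the outer face form boundary segments of even length, and one checks directly — using that $M$ restricted to the boundary of $F$ must be a near-perfect configuration forced by the absence of links — that the edges of $E(F)$ alternate in and out of $M$. Hence $M$ itself witnesses that $S \cup \{F\}$ is resonant, contradicting maximality of $S$. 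In the first case ($F$ has the $M$-link to every neighbour), I would rotate the edges of $F$: since $F$ is even, $E(F)$ decomposes into two perfect matchings of the cycle bounding $F$, and the link property guarantees that replacing $M \cap E(F)$ by its complement in $E(F)$ yields another perfect matching $M'$ of $G$ (the links toward the neighbours flip consistently, and no vertex outside $F$ is touched). After this rotation $F$ becomes $M'$-alternating. It remains to see that every face of $S$ is still $M'$-alternating under $M'$; this holds because $M$ and $M'$ differ only on $E(F)$, and $E(F)$ is disjoint from every face in $S$, so $M' \cap E(F') = M \cap E(F')$ for each $F' \in S$. Thus $M'$ witnesses that $S \cup \{F\}$ is resonant, again contradicting maximality.

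The main obstacle I anticipate is the careful verification in the ``no link'' case that the absence of $M$-links to all neighbours actually forces $F$ to be $M$-alternating — one has to track how $M$ behaves on each boundary segment of $F$ and on the two link-edges toward each neighbour (which are \emph{not} in $M$ here), and confirm that the parity of the boundary segments (even, by regularity) makes the in/out pattern around the whole cycle $E(F)$ alternate without defect. This is where regularity is essential and where a picture or an explicit local case analysis along a single boundary segment would be needed; the rest of the argument is bookkeeping about disjointness and about rotations not disturbing faces in $S$.
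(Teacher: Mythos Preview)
Your case split is on whether $F$ has $M$-links \emph{outward} to its neighbours, whereas the paper splits on whether some neighbour has an $M$-link \emph{inward} to $F$; these are not the same, and your Case~2 breaks exactly where they diverge. Suppose $F$ has no $M$-link to any neighbour (your Case~2) but some neighbour $F_i$ does have the $M$-link to $F$: the two edges of $F_i$ adjacent to the common edge $e$ lie in $M$, so $e\notin M$ and the two edges of $F$ adjacent to $e$ are also forced out of $M$ (their endpoints are already covered by the $F_i$-link). Three consecutive edges of $\partial F$ are then outside $M$, so $F$ is \emph{not} $M$-alternating---no parity argument on the boundary segments of $F$ will rule this configuration out. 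The paper's remedy is to enlarge $S$ by $F_i$ rather than $F$: by Lemma~\ref{lema0}, $F_i$ then has $M$-links to \emph{all} its neighbours, which forces $F_i$ itself to be $M$-alternating and simultaneously prevents any neighbour of $F_i$ from being $M$-alternating (hence $F_i$ is disjoint from every face of $S$), so $S\cup\{F_i\}$ is resonant, contradicting maximality.

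A smaller issue: in your Case~1 the rotation is superfluous and its justification is circular. If $F$ has $M$-links to every neighbour, then at each degree-$3$ vertex of $\partial F$ the link edge is in $M$ and the shared edge is out, so $F$ is already $M$-alternating and $S\cup\{F\}$ is resonant with the original $M$. Conversely, ``replace $M\cap E(F)$ by its complement in $E(F)$'' yields a perfect matching only when $F$ is $M$-alternating, which is precisely what you needed anyway.
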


\begin{proof}
 Let $G$ be a regular CERS  with   a maximal resonant set  $S$, where  $M$ is a  perfect matching of $G$ such that every face  in $S$ is $M$-alternating.
 
 Let    $F$  be  an inner face that is not in $S$ and let   $F_1, F_2,\ldots , F_k$ be the inner faces of $G$   adjacent to $F$.  For the contrary, suppose that none of the faces $F_1, F_2,\ldots , F_k$ belongs to $S$.  If none of the faces  $F_1, F_2,\ldots F_k$  has an $M$-link to $F$, then $F$ is $M$-alternating  and  we have a contradiction since $S$ is maximal. Therefore, there is the $M$-link from face $F_i$ to face $F$ for some $i=1,\ldots, k$.  By Lemma \ref{lema0}, face $F_i$ has an $M$-link to every adjacent inner face, so $F_i$ is $M$-alternating and belongs to $S$ - a contradiction. \qed 

\end{proof}

\begin{lemma}
Let $G$ be  a regular CERS and  $S$ a resonant set of $G$.   Then there exists a perfect matching $M$ of $G$ such that 
for every face $F$ from $S$  there is an $M$-link to every inner face adjacent to $F$.
\label{link}
\end{lemma}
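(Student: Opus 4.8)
The plan is to start from an arbitrary perfect matching $M_0$ that witnesses $S$ being resonant (so every face in $S$ is $M_0$-alternating), and then to carefully modify it, face by face, so that the stronger ``full $M$-link'' condition holds for every $F \in S$. First I would invoke Lemma \ref{lema0}: since $G$ is regular, for a fixed perfect matching any inner face either has a link to all of its neighbors or to none of them. Hence for each $F \in S$ exactly one of two situations occurs under $M_0$: either $F$ already has an $M_0$-link to all adjacent faces (the good case), or $F$ has no $M_0$-link at all and is $M_0$-alternating (the bad case, which is exactly the case we must eliminate). The goal is to turn every bad face into a good one without spoiling any already-good face.

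The key observation is that if $F \in S$ is $M_0$-alternating but has no $M_0$-link, then rotating the edges of $F$ (replacing $M_0 \cap E(F)$ by $E(F) \setminus M_0$) produces a new perfect matching $M_1$ in which $F$ now does have a link to every neighbor — because after the rotation the two boundary edges of each link incident to $F$ switch status, and by regularity (even-length boundary segments) this forces all of them simultaneously into $M_1$. The faces of $S$ are pairwise disjoint by definition of a resonant set, so the edge sets $E(F)$ for distinct $F \in S$ are pairwise disjoint and these rotations can be performed independently and simultaneously: let $M$ be obtained from $M_0$ by rotating every bad face of $S$ at once. I would then check that $M$ is still a perfect matching (each rotation is a local exchange on a set of vertices disjoint from the others) and that every face of $S$ now has an $M$-link to all its neighbors: the previously-bad faces were fixed by their own rotation, and a previously-good face $F$ keeps all its links because the only edges whose status changed lie on the bad faces, which are disjoint from $F$ — so at worst an edge of a link from $F$ to a neighbor $F'$ could change only if that edge lies on a bad face $F'' \in S$, but then $F''$ shares the link edge with $F$, contradicting disjointness of faces in $S$.

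The step I expect to be the main obstacle is verifying cleanly that a good face stays good after the simultaneous rotation — i.e., that changing the matching on the bad faces cannot destroy a link emanating from an untouched face. The subtlety is that a link from $F$ to $F'$ consists of two specific boundary edges, and one must argue that neither of these can be an edge of some bad face being rotated; this follows from the pairwise disjointness of the faces in $S$ together with the fact established in the preliminaries that a link edge of $F$ lies on the boundary of $F$, hence cannot simultaneously lie on another face of $S$. A secondary point to be careful about is the regularity hypothesis entering twice: once (via Lemma \ref{lema0}) to guarantee the all-or-nothing dichotomy for links, and once to guarantee that rotating a linkless alternating face actually creates links to \emph{all} neighbors rather than just some — both reductions to ``every boundary segment has an even number of vertices.'' Once these checks are in place, $M$ is the desired perfect matching and the lemma follows.
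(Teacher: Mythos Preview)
Your proposal is correct and follows essentially the same idea as the paper. The paper's proof is extremely terse --- it simply asserts that, because $S$ is resonant, one can take a perfect matching $M$ in which every $F\in S$ is $M$-alternating \emph{and} has an $M$-link to at least one neighbour, and then invokes Lemma~\ref{lema0}; your rotation-of-bad-faces argument is exactly the justification the paper leaves implicit for the existence of such an $M$, and your verification that good faces stay good (via disjointness of the faces in $S$) is the only nontrivial check.
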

\begin{proof}
Since $S$ is a resonant set, there exists a perfect matching $M$ such that all faces from $S$ are $M$-alternating and such that there is an $M$-link from $F$ to some inner face adjacent to it. Therefore, the result  follows directly  by Lemma \ref{lema0}. \qed  
\end{proof}

\begin{lemma}
If $G$ is a regular CERS and $S$ a resonant set of $G$, then  $b(S)$ is an element in ${\cal L}(G)$. Furthermore, if $S$ is a maximal resonant set, then $b(S)$ is a maximal element in  $({\cal L}(G),\leq)$.
\label{lema2}
\end{lemma}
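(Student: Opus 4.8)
\textbf{Proof proposal for Lemma \ref{lema2}.}

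The plan is to establish the two claims separately, using the preceding lemmas together with Algorithm 1 and the final observation of Section 2. For the first claim, the natural approach is to exhibit a perfect matching $M$ whose binary code equals $b(S)$. By Lemma \ref{link}, there is a perfect matching $M$ such that for every face $F \in S$ there is an $M$-link to every inner face adjacent to $F$; I would in fact strengthen the choice of $M$ so that, simultaneously, no face outside $S$ is the source of an $M$-link towards its smallest-indexed neighbour. The key tool is the observation at the end of Section 2: if $x$ is the code of $M$ and $F_j$ is the inner face of smallest index adjacent to $F_k$, then $x_k = 1$ iff there is an $M$-link from $F_k$ to $F_j$. So I must argue that one can pick $M$ with: for $k$ with $F_k \in S$, the $M$-link from $F_k$ to its smallest neighbour exists; and for $k$ with $F_k \notin S$, it does not. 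Since $G$ is regular, Lemma \ref{lema0} guarantees that an $M$-link from a face goes to all of its neighbours or to none, so the links emanate precisely from a set of faces, and what remains is to see that this set can be taken to be exactly $S$. This should follow because $S$ is a resonant set: the faces of $S$ are pairwise disjoint, so rotating along them is consistent, and one can take $M$ to be $M_0 \oplus \bigcup_{F \in S} E(F)$ for a suitable reference matching $M_0$ chosen with no links at all (or adjust to kill unwanted links outside $S$), ensuring the code is exactly $b(S)$; hence $b(S) \in {\cal L}(G)$.

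For the second claim, suppose $S$ is a maximal resonant set and, for contradiction, that $b(S)$ is not maximal in $({\cal L}(G), \leq)$. Then there is a perfect matching $M'$ with code $y \in {\cal L}(G)$ satisfying $b(S) \leq y$ and $b(S) \neq y$, so $y_k = 1$ for all $F_k \in S$ and $y_m = 1$ for some $F_m \notin S$. The idea is to read off from $y$ a resonant set strictly larger than $S$, contradicting maximality. Using the Section 2 observation again, $y_m = 1$ means there is an $M'$-link from $F_m$ to its smallest-indexed neighbour, and by Lemma \ref{lema0} applied to $M'$ (regularity), $F_m$ has an $M'$-link to every adjacent face, so $F_m$ is $M'$-alternating. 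More generally the set $S' = \{ F_k : y_k = 1 \}$ consists of $M'$-alternating faces; the point to verify is that these faces are pairwise disjoint, i.e. $S'$ is genuinely a resonant set. This disjointness is where I expect the main obstacle to lie: two faces sharing an edge cannot both be $M'$-alternating in a way compatible with a single matching unless one controls how the links interact, so I would argue that if $F_k$ and $F_{k'}$ were adjacent with $y_k = y_{k'} = 1$ then the link between them forces a parity contradiction with the alternating condition — precisely the kind of statement the regularity hypothesis (all adjacent triples regular, all boundary segments even) is designed to rule in or out. Granting disjointness, $S \subsetneq S'$ and $S'$ is resonant (witnessed by $M'$), contradicting maximality of $S$; therefore $b(S)$ is maximal.

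The overall structure is thus: (i) use Lemma \ref{link} and Lemma \ref{lema0} to build a matching realizing $b(S)$ as a code, invoking the Algorithm 1 observation to identify which coordinates are $1$; (ii) for maximality, translate ``$b(S) \le y$, $b(S) \ne y$'' back into a larger collection of $M'$-alternating faces and show this collection is a resonant set, contradicting maximality. The delicate points are the realizability in (i) — making sure the reference matching can be chosen so that the induced code is \emph{exactly} $b(S)$ and not something strictly larger, which again uses that non-$S$ faces can be kept link-free — and the disjointness argument in (ii), which is the real use of regularity. I would handle (i) by first recording that there is a perfect matching with all-zero code (rotating nothing), then noting that the resonant set $S$ lets us rotate exactly the faces of $S$, and finally checking with Lemma \ref{lema0} that no extra links appear; (ii) reduces to the local parity claim about two adjacent $M'$-alternating faces, which in a regular CERS is forced.
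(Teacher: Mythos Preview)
Your plan is viable but diverges from the paper's argument in both halves, and you leave precisely the steps that need regularity only sketched.

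For the first claim the paper does \emph{not} build up from the all-zero matching. Instead it treats the maximal case first: starting from the matching $M$ supplied by Lemma~\ref{link}, it uses Lemma~\ref{lema1} to guarantee that every face $F_j\notin S$ has a neighbour $F_{j'}\in S$; the $M$-link from $F_{j'}$ to $F_j$ then blocks any $M$-link out of $F_j$ (via Lemma~\ref{lema0}), forcing $\ell(M)_j=0$. Only after $b(S)=\ell(M)$ is established for maximal $S$ does the paper pass to an arbitrary resonant $S'\subset S$ by rotating the faces of $S\setminus S'$ one at a time. Your route---take the all-zero matching $M_0$ and rotate up along $S$---also works, but it hinges on the fact that in a regular CERS the all-zero matching contains every common edge and hence makes \emph{every} face alternating; you wave at this (``or adjust to kill unwanted links'') without proving it. The paper's detour through Lemma~\ref{lema1} avoids that analysis entirely.

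For maximality you argue by extracting from a strictly larger code $y$ the set $S'=\{F_k:y_k=1\}$ and showing it is a resonant set properly containing $S$. This is correct---Lemma~\ref{lema0} gives that two adjacent faces cannot both carry outgoing $M'$-links, so $S'$ is independent, and the same lemma makes each $F_k\in S'$ $M'$-alternating---but note that this is exactly the content of the \emph{next} lemma (Lemma~\ref{lema3}) in the paper. The paper keeps the two directions separate and instead argues maximality locally: take a covering element $\ell(\widehat M)$ differing from $\ell(M)$ only at position $j$, invoke Lemma~\ref{lema1} to find $F_k\in S$ adjacent to $F_j$, and derive $\ell(\widehat M)_k=0$ from the $\widehat M$-link out of $F_j$, contradicting $\ell(M)_k=1$. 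Your approach is arguably more conceptual; the paper's is shorter and reuses Lemma~\ref{lema1} a second time rather than anticipating Lemma~\ref{lema3}.
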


\begin{proof}
Let $G$ be a regular CERS with well-ordered  inner faces $F_i$, $i  \in  \lbrace 1, \ldots, n \rbrace$, and let ${\cal L}(G)$  be the  set of binary labels of perfect matchings of $G$. First,  let $S$ be a maximal resonant set of a regular CERS   $G$ and $b(S)$ its binary label. We now show that $b(S) $ is an element of ${\cal L}(G)$ by constructing a perfect matching $M$ of $G$ such that $b(S)=\ell (M)$. By Lemma \ref{link} there exists such a perfect matching $M$ of $G$ that for every face $F$ from $S$  there is an $M$-link to every inner face adjacent to $F$. If $F_i \in S$, then it holds  $\ell (M)_{i}=1$ and so $b(S)_i=\ell (M)_{i}$.
 
On the other side, let  $F_j$ be a face that is not in $S$, so   $b(S)_j=0$.   Then, due to Lemma \ref{lema1},
at least one of the inner faces adjacent to face  $F_j$ must be in $S$, let it be face $F_{j'}$. Obviously, by the definition of perfect matching $M$ there is the $M$-link from $F_{j'}$ to $F_{j}$. Consequently, there is no $M$-link from $F_j$ to $F_{j'}$ and hence, by Lemma \ref{lema0},  there is no link from $F_j$ to any other inner face adjacent to 
it, so $\ell (M)_{j}=0$ and $b(S)_j=\ell (M)_{j}$. We have shown that $b(S)=\ell (M)$ and $b(S) \in {\cal L}(G)$.

Suppose that $S' \subset S$ and let $F=F_i$ be an inner face such that $F \in S \setminus S'$.  We define $M' = M \oplus E(F)$. Since $\ell(M)_i=1$, it holds $\ell(M')_i=0$. Obviously, for any $j \neq i$ we have $\ell(M)_j=\ell(M')_j$. After repeating the same procedure on all faces from $S \setminus S'$, we obtain the perfect matching $M^*$ such that $b(S')=\ell(M^*)$.

Finally, we have to show that $\ell (M)$ is a maximal element in $({\cal L}(G),\leq)$. For the contrary, suppose that there exists a perfect matching  $\widehat{M}$ of $G$ such that
 $\ell (\widehat{M})$  covers $\ell (M)$.  More precisely, there exists exactly one position  $j \in \lbrace 1, \ldots, n \rbrace$ such that 
 $(\ell (\widehat{M}))_j=1$ and $(\ell (M))_j=0$ and the binary labels of $\widehat{M} $ and $M$ coincide on all other positions. Then the face 
$F_j$ is not in $S$. By Lemma \ref{lema1} there exists an inner face $F_k$ adjacent to $F_j$ such that $F_k$ is in $S$.
Since $\ell(\widehat{M})_{j}=1$, we have an $M'$-link from face $F_{j}$ to some other inner face and consequently by 
Lemma \ref{lema0} to all faces adjacent to $F_j$, including face $F_k$. But then there is no $\widehat{M}$-link from $F_k$ to $F_j$ and by Lemma \ref{lema0} we have $\ell(\widehat{M})_k=0$ and hence $\ell(M)_k=0$. This leads to a contradiction since
$F_k \in S$ and so $\ell(M)_k=1$. \qed 

 \end{proof}

\begin{lemma}
Let $G$ be a regular  CERS and  $\ell(M)$  a maximal element in
$({\cal L}(G), \leq)$. Then there exists a maximal resonant set $S$ of $G$ such that $b(S)=\ell(M)$.
\label{lema3}
\end{lemma}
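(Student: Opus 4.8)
The goal is the converse direction of Lemma \ref{lema2}: starting from a maximal element $\ell(M)$ of $({\cal L}(G),\leq)$, produce a maximal resonant set $S$ with $b(S)=\ell(M)$. The natural candidate is of course $S = \{F_i \,;\, \ell(M)_i = 1\}$, i.e.\ exactly the faces indexed by the $1$-positions of the maximal codeword. So the plan is: (i) show that this $S$ is a resonant set, namely that the faces in $S$ are pairwise disjoint and are all $M$-alternating; and (ii) show that $S$ is a \emph{maximal} resonant set, i.e.\ no face outside $S$ can be added while keeping the family resonant. Step (i) should follow from the observation recalled at the end of Section 2: $\ell(M)_i = 1$ means precisely that there is an $M$-link from $F_i$ to its lowest-indexed neighbour, and by Lemma \ref{lema0} (regularity) this forces an $M$-link from $F_i$ to \emph{all} its neighbours, so $F_i$ is $M$-alternating. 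Pairwise disjointness of the faces in $S$ is then a structural fact about CERS: two adjacent inner faces sharing a link cannot both be $M$-alternating for the same $M$ (rotating one of them destroys the alternation of the other), so if $F_i, F_k \in S$ are adjacent we get a contradiction; hence $S$ consists of disjoint $M$-alternating faces and is resonant, with $b(S) = \ell(M)$ by construction.

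For step (ii), suppose $S$ is not maximal, so there is a resonant set $S' \supsetneq S$. By Lemma \ref{link} applied to $S'$ there is a perfect matching $M'$ such that every face of $S'$ has an $M'$-link to all its neighbours. I would then compare the binary label $\ell(M')$ with $\ell(M)$: for every $F_i \in S \subseteq S'$ we have $\ell(M')_i = 1 = \ell(M)_i$, and for any face $F_j \in S' \setminus S$ we also have $\ell(M')_j = 1$ while $\ell(M)_j = 0$. Thus $\ell(M) \lneq \ell(M')$ on the positions indexed by $S'$, and we need to rule out that $\ell(M')$ could have a $0$ somewhere that $\ell(M)$ has a $1$ — but that cannot happen since $S \subseteq S'$ forces $1$'s of $\ell(M)$ to be $1$'s of $\ell(M')$. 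Hence $\ell(M')$ strictly dominates $\ell(M)$ in the poset $({\cal L}(G),\leq)$, contradicting maximality of $\ell(M)$. Therefore no such $S'$ exists and $S$ is a maximal resonant set.

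The step I expect to be the main obstacle is the precise verification in (i) that $S = \{F_i \,;\, \ell(M)_i = 1\}$ is genuinely resonant, i.e.\ that \emph{all} these faces are simultaneously $M$-alternating for the \emph{one and the same} matching $M$ — one must use that $\ell(M)_i = 1$ reading the link structure of $M$ itself, together with Lemma \ref{lema0}, rather than switching matchings face by face — and the disjointness argument, which uses that adjacent inner faces of a CERS share a link and so cannot both be alternating. Once the candidate $S$ is correctly identified and shown resonant, maximality is essentially forced by the poset comparison above, so the bulk of the care goes into the resonance verification and the bookkeeping between resonant sets, their binary representations, and the codes produced by Algorithm 1.
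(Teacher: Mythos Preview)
Your proposal is correct and follows essentially the same route as the paper: set $S=\{F_i:\ell(M)_i=1\}$, use the end-of-Section~2 observation together with Lemma~\ref{lema0} to see that every $F_i\in S$ has $M$-links to all its neighbours (hence is $M$-alternating), and for maximality produce via Lemma~\ref{link} a perfect matching $M'$ with $\ell(M)\lneq \ell(M')$ from any strictly larger resonant set $S'\supsetneq S$.

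One small correction to your disjointness step: it is \emph{not} true that two adjacent inner faces cannot both be $M$-alternating (take the common edge in $M$). The correct obstruction is that if $F_i$ has an $M$-link to $F_k$, then the three consecutive edges of $F_k$ around the common edge are all outside $M$, so $F_k$ cannot have an $M$-link to $F_i$; since $\ell(M)_i=\ell(M)_k=1$ would (by Lemma~\ref{lema0}) force $M$-links in both directions, adjacent faces cannot both lie in $S$. With this fix the argument goes through exactly as you outline, and matches the paper's proof.
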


\begin{proof} Let $G$ be a regular CERS with  $n$ inner faces $F_i$, $i \in \lbrace 1, \ldots, n \rbrace$,
 $\ell (M)$ a maximal element in $({\cal L}(G),\leq)$  for a perfect matching of  $M$ of $G$, and $S$ a set of inner faces of $G$ such that $F_i \in S$ iff $\ell (M)_i=1$. We have to show that
$S$ is a maximal resonant set of $G$.

By the binary coding procedure and Lemma \ref{lema0} it follows that all faces in $S$ are $M$-alternating
and no two faces from $S$ are adjacent. Therefore, $S$ is a resonant set. Suppose $S$ is not a maximal resonant set, so there exists a resonant set $S'$ such that
$S \subset S'$. Let $F'$ be a face such that $F' \in S'\setminus S$.  Then there exists a perfect matching $M'$ of $G$ such
that  $S'$ is an $M'$-alternating set  and every face in $S'$, by Lemma \ref{lema0}, has a link to all faces adjacent to it
and consequently $\ell (M')_i=1$ if $F_i \in S'$. Let $F':=F_j$, so $\ell (M')_j=1$ (note that $F_j \notin S$).  
 By the definition of $S$, we have  $\ell (M)_j=0$ and therefore
 $\ell(M) \leq \ell(M')$, which  is a contradiction with the maximality of $\ell(M)$. \qed 

 \end{proof}

We now continue with the proof.
For  $X={\cal L}(G)$ let  $\widehat{X}$ be the set of all maximal elements of $({\cal L}(G),\leq)$.
By Lemma \ref{lema3}, for each $x=\ell (M) \in \widehat{X}$ there is a maximal resonant set  $S$ of $G$ such that $b(S)=\ell (M)$.
 By Lemma \ref{lema2}, for every subset $S'$ of $S$ there is a perfect matching $M'$ such that $b(S')=\ell(M')$. Therefore, if $x' \in B^n$ and $x' \leq x$, then $x' \in {\cal L}(G)$. It follows that $R(G) = \left\langle \cup_{x \in \widehat{X}} I_{Q_n}(0^n,x)  \right\rangle$, which means that $R(G)$ is a daisy cube. \qed
 
\subsection{Proof of the second implication of Theorem \ref{glavni_daisy}}

In this subsection, we show that if $R(G)$ is a daisy cube, then $G$ is a regular CERS. Some observations and lemmas are first presented.

\begin{observation} \label{ob}
If $G$ is a hypercube and $x,y,z \in V(G)$ such that $xy \in E(G)$ and $yz \in E(G)$, then there exists $u \in V(G)$, $u \neq y$, such that $u$ is adjacent to $x$ and $z$.
\end{observation}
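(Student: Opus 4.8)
The plan is to argue purely combinatorially via the coordinate representation of the hypercube. First I would identify $V(G)$ with $\{0,1\}^n$, so that two vertices are adjacent exactly when the corresponding strings differ in precisely one position. Since $xy \in E(G)$, the strings $x$ and $y$ differ in a single position, say position $i$; since $yz \in E(G)$, the strings $y$ and $z$ differ in a single position, say position $j$.

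The heart of the argument is the case $i \neq j$ (which is the same as saying $x \neq z$, i.e.\ that $x,y,z$ genuinely form a path of length two). Here I would take $u$ to be the string obtained from $y$ by flipping the entries in both positions $i$ and $j$ simultaneously. A one-line check then finishes the case: $u$ and $y$ differ in exactly the two positions $i,j$, so $u \neq y$; $u$ and $x$ differ in exactly the single position $j$ (they already agree in position $i$, since both were obtained from $y$ by flipping that position), so $ux \in E(G)$; and symmetrically $u$ and $z$ differ in exactly the single position $i$, so $uz \in E(G)$. Thus $x,y,z,u$ is a $4$-cycle, and $u$ is the desired vertex.

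If instead $i = j$, then $x$ and $z$ are both obtained from $y$ by flipping the same coordinate, hence $x = z$; in this degenerate configuration any neighbour of $x$ other than $y$ works as $u$, and such a neighbour exists as soon as the ambient hypercube has dimension at least two — and in any case it is only the non-degenerate form, with $x,y,z$ pairwise distinct, that will be invoked later. I do not foresee a genuine obstacle: the substantive content is just the standard fact that a path of length two in a hypercube closes up to a $4$-cycle, and the only thing requiring a little care is spelling out that $u$ differs from $x$ (respectively $z$) in exactly one coordinate rather than in two.
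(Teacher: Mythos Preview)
Your argument is correct; the paper states this as an observation without proof, and your coordinate-flipping construction is exactly the standard verification that any length-two path in $Q_n$ completes to a $4$-cycle. Your handling of the degenerate case $i=j$ is also accurate, and as you note, the paper only ever applies the observation to genuine paths $v_1,v_2,v_3$ inside a path on four vertices, so the non-degenerate case is all that is needed.
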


\begin{observation} \label{ob2}
If $x_1x_2\ldots x_n$ is the binary code of some perfect matching of CERS $G$, then for any $i \in \lbrace 1, \ldots, n-1 \rbrace$ there exists such a perfect matching of $G$ that its binary code is $x_1 x_2 \ldots x_{i} 0^{n-i}$.
\end{observation}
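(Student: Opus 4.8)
The plan is to derive Observation \ref{ob2} directly from the combinatorial structure of Algorithm 1, without appealing to the geometry of perfect matchings at all. For $k \in \{2, \ldots, n\}$, let $S^{(k)}$ denote the value of the set $S$ in Algorithm 1 right after the iteration that processes the face $F_k$, with the convention that $S^{(2)} = \{00, 01, 10\}$ is the initial set declared before the loop. By the results of \cite{br-tr-zi,kl-br} recalled above, $S^{(n)}$ is exactly the set of binary codes of the perfect matchings of $G$, so the hypothesis reads $x_1 \ldots x_n \in S^{(n)}$ and the desired conclusion reads $x_1 \ldots x_i 0^{n-i} \in S^{(n)}$.

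The first step is to read off two purely structural properties of the single iteration $S^{(k-1)} \mapsto S^{(k)}$ from the pseudocode, which I will call the \emph{padding rule} and the \emph{truncation rule}. The padding rule states that $w \in S^{(k-1)}$ implies $w0 \in S^{(k)}$: in both the regular and the irregular branch the line $S' := S' \cup \{x0\}$ is executed unconditionally for every $x \in S$, so appending a $0$ to any code of $S^{(k-1)}$ always produces a code of $S^{(k)}$. The truncation rule states that every $z \in S^{(k)}$ has the form $z = w b$ with $w \in S^{(k-1)}$ and $b \in \{0,1\}$; hence deleting the last coordinate of any $z \in S^{(k)}$ yields $z_1 \ldots z_{k-1} \in S^{(k-1)}$. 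This is immediate, since each element placed into $S'$ is obtained by appending a single bit to some $x \in S = S^{(k-1)}$.

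The second step combines the two rules. Assume first that $i \geq 2$. Applying the truncation rule $n - i$ times to $x_1 \ldots x_n \in S^{(n)}$ yields $x_1 \ldots x_i \in S^{(i)}$, and then applying the padding rule $n - i$ times to this code yields $x_1 \ldots x_i 0^{n-i} \in S^{(n)}$, which is therefore the code of a perfect matching of $G$, as required. For the remaining case $i = 1$ the recursion bottoms out at the explicitly listed base set: applying the truncation rule $n-2$ times gives $x_1 x_2 \in S^{(2)} = \{00, 01, 10\}$, and since both $00$ and $10$ lie in $S^{(2)}$ we have $x_1 0 \in S^{(2)}$ regardless of the value of $x_1$; the padding rule applied $n - 2$ times then gives $x_1 0^{n-1} \in S^{(n)}$.

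I expect no genuine obstacle in the computations, since both rules are visibly encoded in the algorithm: the decisive point requiring care is the recognition that the unconditional branch $S' := S' \cup \{x0\}$ is precisely what guarantees that zeroing out a terminal block of coordinates never leaves the set of legal codes, whereas the conditional branches appending a $1$ are exactly the ones that can fail. A secondary point to state cleanly is the base case $i = 1$, where the recursion reaches the hand-declared set $S^{(2)}$ rather than a set of the form $S^{(k)}$ produced by the loop; handling it merely requires the observation that $x_1 0 \in \{00, 01, 10\}$ for either value of $x_1$. The matching-theoretic content — namely that $x_1 \ldots x_i 0^{n-i}$ corresponds to a perfect matching in which none of the faces $F_{i+1}, \ldots, F_n$ carries an $M$-link to its parent — is automatically supplied by the cited correspondence between $S^{(n)}$ and the perfect matchings of $G$, so it need not be verified by hand.
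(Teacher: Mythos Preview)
Your argument is correct. The paper states this result as an observation with no proof supplied, so there is nothing to compare against; your reading of Algorithm~1 via the padding and truncation rules is precisely the kind of justification the authors presumably had in mind, and your separate treatment of the base case $i=1$ (where one lands in the explicitly listed set $S^{(2)}=\{00,01,10\}$ rather than a loop-produced $S^{(k)}$) correctly handles the only point requiring care.
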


\begin{lemma} \label{lema-a}
If $G$ is a daisy cube and $P=v_1,v_2,v_3,v_4$ a path on four vertices in $G$, then $v_1$ and $v_3$ have a common neighbour different from $v_2$; or $v_2$ and $v_4$ have a common neighbour different from $v_3$.
\end{lemma}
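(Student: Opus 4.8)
The plan is to work directly with the binary codes supplied by Algorithm 1 and to exploit the daisy-cube structure through the characterisation $Q_n(X)=\langle \cup_{x\in\widehat X} I_{Q_n}(0^n,x)\rangle$. First I would recall the elementary but crucial fact that in a daisy cube $Q_n(X)$, if $u\le w$ coordinatewise and $w\in Q_n(X)$, then $u\in Q_n(X)$ (downward closure); in particular the all-zero string $0^n$ is a vertex, and this is exactly what Observation \ref{ob2} records for $R(G)$. I would also use Observation \ref{ob}: in the ambient hypercube, any path $a,b,c$ on three vertices closes to a $4$-cycle $a,b,c,u$. So the only thing that can fail is that the fourth vertex $u$ produced by Observation \ref{ob}, though a vertex of $Q_n$, need not lie in the daisy cube $G$. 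The strategy is to show that at least one of the two candidate fourth vertices — the one completing the $4$-cycle on $v_1,v_2,v_3$ or the one completing the $4$-cycle on $v_2,v_3,v_4$ — is forced to be below some vertex of $G$ and hence belongs to $G$.

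Concretely, write $v_2=v_2^1\ldots v_2^n$ and let $i$ be the coordinate in which $v_1$ and $v_2$ differ and $j$ the coordinate in which $v_2$ and $v_3$ differ; similarly let $k$ be the coordinate in which $v_3$ and $v_4$ differ. Since $P$ is a path, $i\neq j$ and $j\neq k$, but $i$ and $k$ may coincide. The common neighbour of $v_1$ and $v_3$ other than $v_2$ is the string $w$ obtained from $v_2$ by flipping both coordinates $i$ and $j$ in the ``other'' way, i.e.\ $w=v_1\oplus e_j=v_3\oplus e_i$; likewise the common neighbour of $v_2$ and $v_4$ other than $v_3$ is $w'=v_2\oplus e_j\oplus e_k$. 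I would argue by cases on the bit values $v_2^i, v_2^j, v_3^k$ (equivalently $v_4^j$): in each case one checks that one of $w,w'$ is coordinatewise below a vertex already known to be in $G$. The natural vertices to compare against are $v_1,v_2,v_3,v_4$ themselves together with the downward-closure and the ``truncation'' vertices $x_1\ldots x_t 0^{n-t}$ guaranteed by Observation \ref{ob2}. For instance, if flipping coordinate $j$ decreases the bit (goes from $1$ to $0$) then $w=v_1\oplus e_j\le v_1$, so $w\in G$ immediately; symmetrically if coordinate $i$ can be lowered from $v_3$ then $w\le v_3$. The remaining cases, where both of these flips are increases, have to be handled using the $v_2\to v_4$ side and the structural fact that a code cannot have a $1$ in position $m$ unless the coding rules permit it relative to the earlier faces — here I would invoke the observation at the end of Section 2 (there is an $M$-link from $F_m$ to its lowest-indexed neighbour iff $x_m=1$) to show the relevant string is a legal code.

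The main obstacle, I expect, is precisely this last family of cases: when neither of the two obvious monotone comparisons with $v_1,\dots,v_4$ works, one must produce a genuinely new vertex of $G$ dominating $w$ or $w'$, and this requires translating the combinatorial constraints of Algorithm 1 into a statement about which $0/1$ patterns are realisable. I would isolate this as a short sub-claim: \emph{if $x$ is a code and $m$ is the largest index with $x_m=1$, then $x$ with $x_m$ replaced by $0$ is again a code} (this is immediate from Observation \ref{ob2} with $t=m-1$), and more generally that lowering the ``topmost'' one-bits preserves legality. Feeding this back, in the hard cases one lowers the highest one-bit of $v_2$ (or of the relevant neighbour) to manufacture a dominating code, completing the required $4$-cycle. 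Finally I would note that the statement is symmetric under reversing the path $v_1v_2v_3v_4$, which halves the casework and lets me assume, say, $i\le k$ or that coordinate $j$ is not the largest active coordinate.
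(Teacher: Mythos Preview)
Your setup is sound, but there is a real confusion in the ``hard case'' that undermines the argument as written. Lemma~\ref{lema-a} is a statement about an \emph{arbitrary} daisy cube $G=Q_n(X)$; it is not a statement about the resonance graph of a CERS. Consequently, Algorithm~1, Observation~\ref{ob2}, and the $M$-link characterisation from Section~2 are simply not available here: the vertices of $G$ need not be codes of perfect matchings of anything, and invoking ``the coding rules permit a $1$ in position $m$'' has no meaning for a generic daisy cube. So the part of your plan that proposes to finish the remaining case by appealing to those CERS-specific facts is a genuine gap.

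The good news is that your own first tool, downward closure, already finishes the job with no extra input. In your notation, $w=v_1\oplus e_j=v_3\oplus e_i$ and $w'=v_2\oplus e_k=v_4\oplus e_j$. If $v_2^j=1$ then, as you noted, $w\le v_1$ and you are done. If instead $v_2^j=0$, then $v_3^j=1$; since $j\neq k$ we have $v_4^j=v_3^j=1$, hence $w'=v_4\oplus e_j\le v_4$, so $w'\in G$ and $w'\neq v_3$. That is the whole proof: two cases on the single bit $v_2^j$, using only that a daisy cube is downward closed. No analysis of $v_2^i$ or $v_3^k$ is required, and no CERS machinery is needed.

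For comparison, the paper argues differently: it uses the decomposition $Q_n(X)=\bigcup_{x\in\widehat X} Q_n(\{x\})$ into maximal sub-hypercubes and shows that some three consecutive vertices of $P$ lie in a common sub-hypercube, after which Observation~\ref{ob} closes the $4$-cycle inside that sub-hypercube. The key step there is that if $v_3$ does not lie in the sub-hypercube containing $v_1,v_2$, then necessarily $v_2\le v_3$, which forces $v_2,v_3,v_4$ into a common sub-hypercube. Your coordinate argument (once repaired as above) is in fact a bit shorter and more direct than the paper's, but both hinge on the same downward-closure property.
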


\begin{proof}
Let $G = Q_n(X)$, where $X \subseteq B^n$ and let $\widehat{X} = \lbrace x_1, \ldots, x_k \rbrace$ be the set of all the maximal elements of $Q_n(X)$. For any $x_i \in \widehat{X}$, let $Q_i = Q_n(\lbrace x_i \rbrace)$ be the hypercube corresponding to $x_i$, $i \in \lbrace 1, \ldots, k \rbrace$. 
Obviously, the vertices $v_1$, $v_2$ are both contained in some hypercube $Q_r$, $r \in \lbrace 1, \ldots, k \rbrace$. If $v_3 \in V(Q_r)$, then by Observation \ref{ob} there exists some vertex $u \in V(Q_r)$ different from $v_2$ such that $u$ is adjacent to $v_1$ and $v_3$ (note that $u$ can be also $v_4$). Therefore, suppose that $v_3$ does not belong to $Q_r$. We will now show that $v_2,v_3$, and $v_4$ all belong to some hypercube.
Firstly, we notice that $v_2 \leq v_3$, since otherwise $v_3$ would be in $Q_r$. Consider two options:
\begin{itemize}
\item [(a)] $v_4 \leq v_3$: in this case, $v_2$ and $v_4$ belong to every hypercube $Q_j$ for which $v_3 \in V(Q_j)$.
\item [(b)] $v_3 \leq v_4$: in this case, $v_2$ and $v_3$ belong to every hypercube $Q_j$ for which $v_4 \in V(Q_j)$.
\end{itemize}

\noindent
Therefore, there exists $j \in \lbrace 1, \ldots, k \rbrace$ such that $v_2$, $v_3$, and $v_4$ belong to $Q_j$. Hence, by Observation \ref{ob} there exists some vertex $u \in V(Q_j)$, $u \neq v_3$, such that $u$ is adjacent to $v_2$ and $v_4$. \qed
\end{proof}

\begin{lemma} \label{lema-b}
Suppose $G$ is a CERS with at least three faces such that $G$ is not a regular CERS. Then in the resonance graph $R(G)$ there exist a path $P=M_1,M_2,M_3,M_4$ on four vertices such that $M_1, M_3$ do not have a common neighbour different from $M_2$ and $M_2, M_4$ do not have a common neighbour different from $M_3$.
\end{lemma}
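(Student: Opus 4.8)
<br>

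The goal is to produce, in a non-regular CERS with at least three faces, a "bad" path of length 3 in the resonance graph — one that witnesses the failure of the combinatorial condition from Lemma \ref{lema-a}. Since $G$ is not regular, there is an adjacent triple of faces $(F_i, F_j, F_k)$ that is irregular; equivalently, along some boundary segment the number of vertices is odd. My plan is to localize the argument to this irregular triple and build the path explicitly from perfect matchings (equivalently, from their binary codes under Algorithm 1).

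First I would set up notation: choose a well-ordering of the inner faces so that the irregular triple sits in a convenient position; it is cleanest to pick $F_j$ with its two neighbours $F_i$ (smaller index) and $F_k$, where $(F_i,F_j,F_k)$ is irregular, and where $F_i$ or $F_k$ can be taken terminal if needed. The key structural fact to exploit is the "link" behaviour: for a perfect matching $M$, whether $F_j$ has an $M$-link to $F_i$ versus to $F_k$ is governed by the parity encoded in the triple — in a regular triple these two link-states are "aligned", in an irregular triple they are "anti-aligned". Concretely, via the last observation in the Preliminaries and Algorithm 1, in the irregular branch of Algorithm 1 the code bit for $F_k$ can be $1$ only when the bit for $F_j$ is $1$... wait — re-reading, in the irregular case we add $x1$ only if $x_j = 1$, and in the regular case only if $x_j = 0$. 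So the plan is: use this asymmetry to produce four matchings whose codes are, say, (schematically, on the three coordinates $i,j,k$ with everything else held fixed) something like $M_1 \leftrightarrow \cdots 0\,0\,0$, $M_2 \leftrightarrow \cdots 1\,0\,0$ — no. Let me think in terms of rotations instead of codes.

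I would actually prefer the geometric description: two matchings are adjacent in $R(G)$ iff one is obtained from the other by rotating the edges of a single face. So I want a path $M_1 - M_2 - M_3 - M_4$ where the consecutive symmetric differences are $E(F_a), E(F_b), E(F_c)$ for faces $F_a, F_b, F_c$. The "diamond/common-neighbour" condition from Observation \ref{ob} and Lemma \ref{lema-a} says: $M_1, M_3$ have a common neighbour $\ne M_2$ iff $F_a$ and $F_b$ are disjoint (non-adjacent) and both are rotatable in $M_1$ and in $M_3$ simultaneously; similarly for $M_2, M_4$ with $F_b, F_c$. So I should pick the path so that $F_a = F_b$ is forced to be adjacent faces that "interfere" — precisely the situation of the irregular triple. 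The natural choice: take $M_2$ so that $F_j$ is $M_2$-alternating (rotatable), $M_1 = M_2 \oplus E(F_i)$, $M_3 = M_2 \oplus E(F_j)$, $M_4 = M_3 \oplus E(F_k)$. For this to be a genuine path and for both "no common neighbour" conditions to hold, I must verify: (1) $F_i$ is $M_2$-alternating and $F_k$ is $M_3$-alternating — this uses the link structure and that $F_i$ (resp. $F_k$) can be chosen terminal so there's freedom to flip it; (2) $F_i$ and $F_j$ are adjacent, so $M_1, M_3$ cannot have a common neighbour via disjoint faces — but one must rule out the "both live in a hypercube" escape of Lemma \ref{lema-a} by a direct count of neighbours in $R(G)$, showing $|N(M_1) \cap N(M_3)| \le 1$; (3) symmetrically $F_j, F_k$ adjacent kills the common neighbour of $M_2, M_4$. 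The irregularity of the triple is exactly what is needed to guarantee that after rotating $F_j$, the face $F_i$ (or $F_k$) flips its rotatability status in the right direction so that no single other face is simultaneously rotatable.

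The main obstacle, I expect, is (2)/(3): showing that two matchings at distance $2$ in $R(G)$ whose connecting faces are \emph{adjacent} genuinely fail to have a second common neighbour. In a hypercube, distance-$2$ vertices always have exactly two common neighbours; in $R(G)$ one of the two "diagonal" vertices of the would-be square need not be a perfect matching. The crux is that rotating two adjacent faces in succession is not commutative and the "other order" does not yield a perfect matching — this is where the parity hidden in the irregular triple bites, and where I would argue carefully using the local picture around the boundary segment with an odd number of vertices (the two edges of the link, the alternating pattern forced on that segment, and the fact that flipping $F_j$ changes which of $F_i, F_k$ inherits a removable link). I would also need to handle potential small/degenerate cases where $F_i$ or $F_k$ has further neighbours, but choosing the well-ordering so that the relevant outer face is terminal should remove those. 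Once the path and the two non-diamond conditions are in hand, the lemma follows immediately.
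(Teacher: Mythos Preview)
Your plan is essentially the paper's proof: pick an irregular triple, place it as consecutive faces $F_i,F_{i+1},F_{i+2}$ in a well-ordering, and take the path with codes $x100,\ x000,\ x010,\ x011$ (equivalently, your rotations $M_1=M_2\oplus E(F_i)$, $M_3=M_2\oplus E(F_{i+1})$, $M_4=M_3\oplus E(F_{i+2})$), then argue the two ``diagonal'' codes $x110\cdots$ and $x0010\cdots$ are not perfect matchings. So the approach matches.

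Two points are muddled, though. First, your ``direct count of neighbours'' and ``hypercube escape'' detour is unnecessary: $R(G)$ is isometrically embedded in $Q_n$, so any common neighbour of $M_1,M_3$ in $R(G)$ is already a common neighbour in $Q_n$, and there are exactly two of those; one is $M_2$, so you only need to show the other one (the code $x110\cdots$) is not in ${\cal L}(G)$. Likewise for $M_2,M_4$ and the code $x0010\cdots$.

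Second, and this is the real gap, the two missing diagonals fail for \emph{different} reasons, and only one of them is the irregularity of $(F_i,F_{i+1},F_{i+2})$. In Algorithm~1 the irregular branch for $k=i+2$ forbids $x_{i+1}=0,\ x_{i+2}=1$, which kills $x0010\cdots$. But forbidding $x_i=1,\ x_{i+1}=1$ (to kill $x110\cdots$) comes from the step $k=i+1$, i.e.\ from the triple $(F_{i-1},F_i,F_{i+1})$ being \emph{regular} (or from $i=1$, since the initial set $\{00,01,10\}$ excludes $11$). The paper arranges this by choosing the well-ordering so that every triple $(F_j,F_{j+1},F_{j+2})$ with $j<i$ is regular and $F_{i-1}$ is adjacent to $F_i$; your ``take $F_i$ terminal if needed'' is the right instinct but you should make this choice of ordering explicit, since $F_i$ need not be a leaf of the inner dual and the argument breaks if the preceding triple happens to be irregular as well.
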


\begin{proof}
Since $G$ is not regular, it contains three faces $F,F',F''$ such that $F,F'$ have a common edge and $F',F''$ have a common edge and that the triple $(F,F',F'')$ is irregular. Let $F_1, \ldots, F_n$ be a well-ordering of inner faces such that $F=F_i$, $F'=F_{i+1}$, and $F''=F_{i+2}$ for some $i \in \lbrace 1, \ldots, n-2 \rbrace$ and $F_{i-1}$ (if it exists) is adjacent to $F_i$. Moreover, we can suppose that if $j < i$, then the triple $(F_j,F_{j+1},F_{j+1})$ is regular.

If $i=1$, let $x$ be a code of length 0. Moreover, for $i \geq 2$, let $x=x_1 x_2 \ldots x_{i-2} x_{i-1}$, where $x_{i-1}=0$, be the binary code of some perfect matching $M'$ of the graph $G_{i-1}$ (note that there always exists some perfect matching that its binary code ends with $0$). Then $x100, x000, x010$, and $x011$ represent the binary codes of four perfect matchings that form a path $P_4$ in the resonance graph $R(G_{i+2})$, see Figure \ref{lemma_path}. Denote this four perfect matchings by $M_1', M_2', M_3',$ and $M_4'$, respectively. By Observation \ref{ob2}, the binary codes $x10^{n-i}$, $x0^{n-i+1}$, $x010^{n-i-1}$, $x0110^{n-i-2}$ also form a path in $R(G)$. 
We now only need to show that $x10^{n-i}$ and $x010^{n-i-1}$ do not have a common neighbour different from $x0^{n-i+1}$, and that $x0^{n-i+1}$ and $x0110^{n-i-2}$ do not have a common neighbour different from $x010^{n-i-1}$. Therefore, it suffices to show that the binary codes $x110^{n-i-1}$ and $x010^{n-i-1}$ are not the vertices in $R(G)$. But this can be quickly observed as the consequence of the fact that the triple $(F_i,F_{i+1},F_{i+2})$ is irregular. \qed

\end{proof}

\begin{figure}[h!] 
\begin{center}
\includegraphics[scale=0.6, trim=0cm 0.5cm 0cm 0cm]{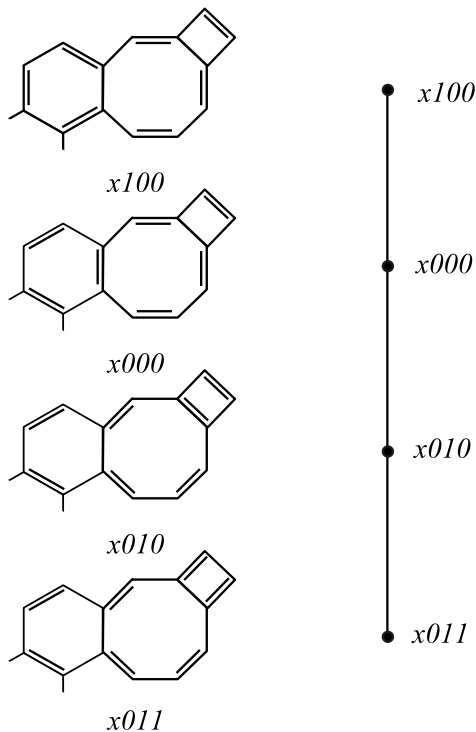}
\end{center}
\caption{\label{lemma_path} Path $P_4$ contained in the resonance graph $R(G_{i+2})$.}
\end{figure}

We finish our proof in the following way.
Suppose that $R(G)$ is a daisy cube. If $G$ has at most two inner faces, then the result is trivial by the definition of a regular CERS. Therefore, suppose that $G$ has at least three faces. If $G$ is not regular, then by Lemma \ref{lema-b} in the resonance graph $R(G)$ there exist a path $P=M_1,M_2,M_3,M_4$ on four vertices such that $M_1, M_3$ do not have a common neighbour different from $M_2$ and $M_2, M_4$ do not have a common neighbour different from $M_3$. Hence, by Lemma \ref{lema-a} the resonance graph $R(G)$ is not a daisy cube, which is a contradiction. So the proof is complete. \qed

\section{Concluding remarks and open problems}

In this paper, we have shown that two even ring chains are resonantly equivalent if and only if their resonance graphs are isomorphic. Moreover, we have proved that the resonance graph of a CERS $G$ is a daisy cube if and only if $G$ is regular. In this final section, some open problems will be presented. 

We have shown in the first part of the paper that there exist CERS $G$ and $H$ such that $R(G) \cong R(H)$ but $G$ and $H$ are not resonantly equivalent. Therefore, the following problem is interesting.

\begin{problem}
Describe a binary relation on the class of all CERS such that the following will hold: two CERS are in the relation if and only if their resonance graphs are isomorphic.
\end{problem}

In Section 4, we have characterized CERS whose resonance graphs are daisy cubes. Therefore, the same problem can be considered for all even ring systems.

\begin{problem}
Among all even ring systems (which are not catacondensed), characterize those whose resonance graphs are daisy cubes.
\end{problem}

It was shown in \cite{kl-br} that the resonance graph of any CERS is a median graph. Moreover, in \cite{vesel} the resonance graphs of catacondensed benzenoid graphs were characterized among median graphs. Therefore, the following problem appears naturally.

\begin{problem}
Among all median graphs, characterize  graphs that are resonance graphs of CERS. Moreover, find an efficient algorithm that recognizes resonance graphs of CERS.
\end{problem}

\section*{Acknowledgment} 

\noindent The authors Petra \v Zigert Pleter\v sek and  Niko Tratnik acknowledge the financial support from the Slovenian Research Agency (research core funding No. P1-0297 and J1-9109).

\end{document}